\documentclass[11pt]{amsproc}

\usepackage{amsmath}
\usepackage{amssymb}
\usepackage{amsthm}
\usepackage{amsfonts}

\RequirePackage{srcltx}
\def\leq {\leqslant}
\def\le {\leqslant}
\def\ge {\geqslant}
\def\geq {\geqslant}

\textwidth 160 mm
\textheight 220 mm
\topmargin 0.15 in
\oddsidemargin -0.0 cm
\evensidemargin -0.0 cm
\usepackage{amssymb}
\usepackage{amsfonts}
\usepackage{amsmath}
\usepackage{graphicx}%
\setcounter{MaxMatrixCols}{30}
\providecommand{\U}[1]{\protect\rule{.1in}{.1in}}
\theoremstyle{plain}

\newtheorem{lemma}{Lemma}
\newtheorem{theorem}{Theorem}
\newtheorem{corollary}{Corollary}
\newtheorem*{thma}{Theorem A}
\newtheorem*{thma'}{Theorem A$'$}
\newtheorem*{pr1}{Problem 1}
\newtheorem*{pr2}{Problem 2}

\theoremstyle{definition}
\newtheorem*{remark}{Remark}
\newtheorem*{remark1}{Remark}

\numberwithin{equation}{section}
\begin{document}
\title[Weighted norm inequalities for convolution and Riesz potential]
 {
 Weighted norm inequalities for convolution \\and Riesz potential
 }
\author{Erlan Nursultanov}
\address{E. Nursultanov \\
 Lomonosov Moscow State University (Kazakh Branch) and
Gumilyov Eurasian National University, Munatpasova 7
\\010010
Astana Kazakhstan} \email{er-nurs@yandex.ru}
\author{Sergey Tikhonov}
\address{S. Tikhonov \\
ICREA
and Centre de Recerca Matem\`{a}tica (CRM), E-08193, Bellaterra, Barcelona
} \email{stikhonov@crm.cat}

\thanks{
This research was partially supported by the MTM2011-27637/MTM, 2009 SGR 1303, RFFI 12-01-00169, NSH 979.2012.1, and
 Ministry of Education and Science of the Republic of Kazakhstan (0112RK02176, 0112RK00608).
}

\subjclass[2000]{Primary 31C15; Secondary 44A35,  46E30}

\keywords{Convolution,  Riesz potential operator, Weights, Lorentz spaces, O'Neil-type inequalities}

\begin{abstract}
In this paper, we prove analogues of O'Neil's inequalities for the convolution in the weighted Lebesgue spaces.
We also establish the weighted two-sided norm inequalities for the potential operator.
\end{abstract} \maketitle

\vskip 0.5cm
\section{Introduction}


In this paper, we study the convolution  of functions on $\mathbb{R}^n$:
\begin{equation*}
(K*f)(x)=\int_{\mathbb{R}^n} K(x-y) f(y)dy, \qquad x\in \mathbb{R}^n.
\end{equation*}

The following generalization of Young's inequality is due to O'Neil  (\cite {ON}) : if
 $1 < p < q < \infty, \;\;\; 0<\tau_1\leq \tau_2\leq\infty$, and $\frac1{r} = 1 - \frac1p + \frac1q$, then
\begin{equation}\label{1}
L^{p,\tau_1}*L^{r,\infty}\subset L^{q,\tau_2},
\end{equation}
where
$L^{p,\tau}=L^{p,\tau}(\Bbb R^n)$ is the Lorentz space (see, e.g., \cite[Ch 4]{be}).
Throughout the paper the expression of form $X*Y\subset Z$ involving function spaces $X,Y,$ and $Z$ means that
whenever $f\in X$, $K\in Y$, then $f*K\in Z$ and
$$ \|f*K\|_Z\le C\|f\|_X \|K\|_Y,
$$
where the constant $C$ being independent of $f$ and $K$.


A generalization of Young's inequality for the weighted  $L^p$ spaces  was obtained in \cite{Kerman}.
 The weighted Lebesgue space
$L^{p}(\omega)=L^{p}(\omega;\Bbb R^n)$ consists of all measurable functions such that
$\|f\|_{
L^{p}(\omega)}
=(\int_{\mathbb{R}^n} |f|^p \omega^{p})^{{1}/{p}}<\infty$, where the weight $\omega$ is a nonnegative locally integrable function.
Moreover,
for the case of power weight $\omega(x)=|x|^s$, we write $L^{p}(\omega; \Bbb R^n)=L^{p}_s(\Bbb R^n)$.

\begin{thma} \cite{Kerman} We have
 \begin{eqnarray}\label{emb1}
 L^{p}_\alpha(\Bbb R^n)*L^{\theta}_{s}(\Bbb R^n)\subset L^{q}_{-\beta}(\Bbb R^n),\qquad 1<p,q,\theta<\infty,\qquad
  \frac{1}{q}\le \frac{1}{p}+\frac{1}{\theta},
\end{eqnarray}
 provided
\begin{eqnarray}\label{parameters1}
 \frac{1}{q}=\frac{1}{p}+\frac{1}{\theta}+\frac{\alpha+\beta+s}{n}-1,
\end{eqnarray}
\begin{eqnarray}\label{parameters2}\alpha<n/p',\quad  \beta<n/q,\quad s<n/\theta',
\end{eqnarray}
and
\begin{eqnarray}\label{parameters3}
\alpha+  \beta\ge 0,\quad \alpha+  s\ge 0, \quad  \beta+s\ge 0.
\end{eqnarray}
\end{thma}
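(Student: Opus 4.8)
The plan is to dualize and reduce the embedding \eqref{emb1} to a single trilinear integral estimate of Stein--Weiss type. Since the dual of $L^q_{-\beta}$ with respect to the Lebesgue pairing is $L^{q'}_{\beta}$ (because $\int fg=\int(f|x|^{-\beta})(g|x|^{\beta})$), the inequality $\|K*f\|_{L^q_{-\beta}}\le C\|f\|_{L^p_\alpha}\|K\|_{L^\theta_s}$ is equivalent to
\[
\Bigl|\int_{\mathbb R^n}\int_{\mathbb R^n}\frac{F(y)\,G(x)\,H(x-y)}{|y|^{\alpha}\,|x|^{\beta}\,|x-y|^{s}}\,dy\,dx\Bigr|\le C\,\|F\|_{p}\,\|G\|_{q'}\,\|H\|_{\theta},
\]
where $F(y)=|y|^{\alpha}f(y)$, $G(x)=|x|^{\beta}g(x)$ and $H(z)=|z|^{s}K(z)$, so that $F\in L^p$, $G\in L^{q'}$, $H\in L^\theta$. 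A direct dimensional analysis of this form under the simultaneous dilation $x\mapsto\lambda x$, $y\mapsto\lambda y$ shows that scale invariance holds exactly when $\tfrac1p+\tfrac1{q'}+\tfrac1\theta=2-\tfrac{\alpha+\beta+s}{n}$, which is precisely \eqref{parameters1}; thus \eqref{parameters1} is forced and is what makes the summation below balanced.

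First I would record the elementary geometric fact that, among the three lengths $|x|$, $|y|$, $|x-y|$, the two largest are always comparable, since the largest is bounded by the sum of the other two. Accordingly I split $\mathbb R^n\times\mathbb R^n$ into the three regions determined by which of $|x|,|y|,|x-y|$ is smallest. On each region two of the three weights merge: for instance where $|x-y|$ is smallest one has $|x|\approx|y|$, so $|y|^{-\alpha}|x|^{-\beta}\approx|x|^{-(\alpha+\beta)}$, and the form reduces to a single power weight $|x|^{-(\alpha+\beta)}$ against a convolution of two of the functions with the remaining factor $|z|^{-s}H$ as kernel. The other two regions are symmetric and produce the combined weights $|x|^{-(\beta+s)}$ and $|y|^{-(\alpha+s)}$.

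Next I would estimate each region dyadically in its dominant scale $2^j$. On a fixed shell the surviving power weight is essentially the constant $2^{-j(\alpha+\beta)}$ (respectively $2^{-j(\beta+s)}$, $2^{-j(\alpha+s)}$), and the localized convolution is controlled by a weighted Young / Stein--Weiss estimate whose admissibility is exactly the hypothesis $\tfrac1q\le\tfrac1p+\tfrac1\theta$. The conditions \eqref{parameters2}, i.e. $\alpha<n/p'$, $\beta<n/q$, $s<n/\theta'$, guarantee that the three power weights are locally integrable against the relevant Lebesgue exponents, so that no single shell contributes a logarithmically divergent factor; equality in any of them would produce precisely such a divergence. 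Summing the shells then amounts to geometric series in $2^{j}$ whose convergence is governed by the sign conditions \eqref{parameters3}: $\alpha+\beta\ge0$, $\alpha+s\ge0$, $\beta+s\ge0$ ensure that in each region the combined weight decays in the correct direction (note these force $\alpha+\beta+s\ge0$, matching $\tfrac1q\ge\tfrac1p+\tfrac1\theta-1$), while \eqref{parameters1} fixes the common ratio so the total matches $\|F\|_p\|G\|_{q'}\|H\|_\theta$ with no loss.

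The main obstacle, I expect, is the endpoint bookkeeping: one must verify that the geometric series genuinely converge under the \emph{non-strict} inequalities \eqref{parameters3}, while the \emph{strict} inequalities \eqref{parameters2} are exactly what rule out borderline logarithmic blow-up on individual shells. Making the ``two largest lengths are comparable'' heuristic quantitative — choosing the cut-offs so that the three truncated convolutions recombine without double counting at the junctions of the regions, and so that the localized Young/Stein--Weiss estimate applies uniformly in $j$ — is the technical heart; everything else is the classical Stein--Weiss and O'Neil machinery.
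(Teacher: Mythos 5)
Your reduction to the trilinear form and the identification of \eqref{parameters1} as the scale--invariance condition are correct, and your three--region decomposition is a legitimate strategy; it is also genuinely different from the paper's treatment, since the paper does not prove Theorem A at all (it quotes \cite{Kerman}), and its own machinery (the rearrangement bound of Lemma \ref{L1}, the net--space characterization of $\|\cdot\|_{L^{p,1}\to L^{q,\infty}}$ from \cite{NT}, and real interpolation) only recovers Theorem A in the sub-case $\alpha,\beta,s\ge 0$, $p\le q$ (Corollary \ref{cor-kerman}). However, your sketch has a genuine gap precisely at the step you postpone as ``bookkeeping,'' and as described that step fails.

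The gap is the summation over the dominant dyadic scale. Because \eqref{parameters1} is exactly scale invariance, the powers of $2^{j}$ cancel \emph{identically} in the shell estimates: in the region $|x|\asymp|y|\asymp 2^{j}$ one gets a contribution $\asymp \|F_j\|_p\|G_j\|_{q'}\|H\|_\theta$ for every $j$, i.e.\ a series with common ratio $1$, not a convergent geometric series, and the sign conditions \eqref{parameters3} do not produce decay there (they control integrability of the merged weights and decay in the \emph{separation} parameter between the two comparable scales and the smallest one, and they degenerate at the equality cases the theorem permits). The only way to sum the dominant scale is H\"older over the disjoint shells, $\sum_j\|F_j\|_p\|G_j\|_{q'}\le\|F\|_p\|G\|_{q'}$, which needs $\tfrac1p+\tfrac1{q'}\ge 1$, i.e.\ $p\le q$; the other two regions similarly need $\theta\le q$, resp.\ $\tfrac1p+\tfrac1\theta\ge 1$. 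None of these is a hypothesis, and Theorem A genuinely allows $q<p$: for instance $n=1$, $p=\theta=2$, $s=0$, $\alpha=2/5$, $\beta=3/10$, $q=10/7$ satisfies \eqref{parameters1}--\eqref{parameters3}. In that case even the single-shell trilinear Young inequality is unavailable (it would require a kernel exponent $c$ with $\tfrac1c=1-\tfrac1p+\tfrac1q>1$), and $\sum_j\|F_j\|_p\|G_j\|_{q'}$ can exceed $\|F\|_p\|G\|_{q'}$ by an arbitrarily large factor. What saves the theorem is that the single function $H$ is shared by all shells, so one must decompose $H$ over scales as well and close the estimate by a discrete Young/Schur argument in \emph{two} shell indices, whose decay rates come from the strict inequalities \eqref{parameters2} (e.g.\ $n/\theta'-s>0$), whose admissibility is where $\tfrac1q\le\tfrac1p+\tfrac1\theta$ enters, and which needs Lorentz-type refinements at the equality cases of \eqref{parameters3}. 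That double-indexed argument is the actual proof, not a routine verification; Kerman's rearrangement/Hardy-inequality approach (and, in its range, the paper's net-space approach) is designed to bypass exactly this difficulty.
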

Convolution inequalities $ L^{p}_\alpha(\Bbb R^n)*L^{\theta}_{s}(\Bbb R^n)\subset L^{q}_{-\beta}(\Bbb R^n)$ were also studied in \cite{bui}.
   Conditions (\ref{parameters1})--(\ref{parameters3}) are necessary as it is shown in Section \ref{final}.

Let us now formulate the first question studied in this paper:
\begin{pr1}
Find sufficient conditions on  weights $\mu$ and $\nu$, so that
$$L^p (\mu) *L^{r,\infty}\subset L^q(\nu), \qquad
1 < p \le q < \infty, \quad 1<r\le \infty.
$$
\end{pr1}
Further, we deal with an important example of the convolution operator $K*f$, where  $K(z)=|z|^{\gamma-n},$   $z\in \Bbb R^n$, i.e.,
  the operator of the fractional integration (or Riesz potential):
\begin{equation}\label{poten}
I_{\gamma}f(x) =
\int\limits_{\Bbb R^n} \frac{f(y)}{ {|x-y|^{n-\gamma}}}dy, \qquad  0<\gamma<n.
\end{equation}
For this operator, (\ref{1}) implies   $I_{\gamma}: L^{p,\tau_1}(\Bbb R^n) \to L^{q,\tau_2}(\Bbb R^n)$  for $\frac1p - \frac1q =\frac{\gamma}{n}$
and $0<\tau_1\le \tau_2\le\infty$.
In particular, this yields the Hardy-Littlewood-Sobolev theorem, i.e.,  $I_{\gamma}: L^{p}(\Bbb R^n) \to L^{q}(\Bbb R^n)$.
Continuity properties of the  potential operator in the Lebesgue spaces are well known, see, e.g.,
\cite{bs}, \cite{Sob}, \cite{Ste3}.

Analogous  questions have been intensively investigated in the weighted Lebesgue spaces.
For the power weights,  Stein and Weiss \cite{st} generalized the Hardy-Littlewood-Sobolev theorem as follows: 
{\it Suppose
\begin{equation}\label{stein-weiss1}
\gamma =\alpha+\beta + n(1/p-1/q), \;\;\alpha< n/p', \;\;\beta < n/q, \;\; 0\le \alpha+\beta,
\end{equation}
then}
 \begin{equation}\label{stein-weiss}
I_\gamma : L^{p}_\alpha(\Bbb R^n)\to L^{q}_{-\beta}(\Bbb R^n),
 \qquad
 1< p\le q<\infty.
\end{equation}
For arbitrary weights $\mu$ and $\nu$, the solution of the
 problem stated as ``give necessary and sufficient conditions on  weights
 for the Riesz potential $I_{\gamma}$
to be bounded from $L^p(\mu, \Bbb R^n)$ to $L^q(\nu, \Bbb R^n)$'', can be found in
\cite{kokil}, \cite{lac}, \cite{maz}, \cite{m-w}, \cite{per}, 
 \cite{sawyer}, \cite{saw}, \cite{saww} and references therein.
In particular, it is known (see \cite[(2.2)]{lac}) that for $1<p\le q<\infty,$
\begin{eqnarray}\label{99}
\qquad\;\;
\|I_{\gamma}\|_{L^{p}(\mu, \Bbb R^n)\to L^{q}(\nu, \Bbb R^n)}
\asymp
\|I_{\gamma}\|_{L^{q'}(\nu^{1-q'}, \Bbb R^n)\to L^{p',\infty}(\mu^{1-p'}, \Bbb R^n)}
+
\|I_{\gamma}\|_{L^{p}(\mu, \Bbb R^n)\to L^{q, \infty}(\nu, \Bbb R^n)}.
\end{eqnarray}
Note also that `upper' triangle case with one weight was considered in \cite{maz} and \cite{maz1} in terms of capacity;
a non-capacity characterization was proved in
\cite{casc}.

Our second goal in this paper is to study two-sided norm inequalities
for the weighted potential operator
\begin{eqnarray}\label{poten}
\qquad\qquad
(A_\gamma f)(y) = (A_{\gamma,\mu,\nu}f)(y) =
\nu(y)\int\limits_{\Bbb R^n} \frac{f(x)\mu(x)}{ {|x-y|^{n-\gamma}}}dx, \qquad y\in \Bbb{R}^n, \qquad 0<\gamma<n,
\end{eqnarray}
in the Lorentz spaces. More precisely, we study the following question. 
\begin{pr2}
Find upper and lower bounds  of the norm of the operator $A_{\gamma}: L^{p,\tau_1}(\Bbb R^n)\to L^{q, \tau_2}(\Bbb R^n)$,
$1 < p, q < \infty,$ $1<\tau_1, \tau_2\le \infty,$ in terms of $\mu$ and $\nu$.
\end{pr2}


Note that this question is similar to the following problem:
to estimate  the norm of the potential  operator in the weighted Lorentz space, i.e.,
$I_{\gamma}:$ ${L^{p, \tau_1}({\mu^{-1}}, \Bbb R^n)\to L^{q, \tau_2}({\nu}, \Bbb R^n)}$, which is, in general, a different question.
The latter case was thoroughly studied by Kerman for the power weights; see Theorems 4.1 and 4.5 in  \cite{Kerman}.



In this paper $F \lesssim G$ means that $F \le C G$; by $C$ we denote positive constants that may be different
on different occasions. Moreover, $F \asymp G$ means that $F \lesssim G \lesssim F$.

For a Lebesgue measurable set $E$, $|E|$ will denote its Lebesgue measure and $\nu(E) =
\int_E \nu(x) dx$ will denote its weighted measure. If $E$ and $W$ are any subsets of $\mathbb{R}^n$, the Minkowski sum of $E$ and $W$ is defined by $E+W=\Big\{x+y: \;x\in E\;\mbox{and}\;y\in W \Big\}$.
The characteristic function of a set $E$ is denoted by  $\chi_{E}$.
Let also $\overline{0}=(0,\ldots,0)$, $\overline{N}=(N,\ldots,N)$, and $B_r({\overline{N}})=\Big\{x\in\mathbb{R}^n: |x-{\overline{N}}|\le r\Big\}$.
As usual, $\psi^*$ is  the decreasing rearrangement of $\psi$ see, e.g., \cite{bs}.

Finally,  $X'$ is the associate space of the space $X$, i.e.,
 $X'=\Big\{g:\|g\|_{X'}=\sup\limits_{\|f\|_X\le 1}|\int fg|<\infty\Big\}$.

\vskip 0.8 cm

\section{Main results}
First, we give a solution of Problem 1 by proving the following extension of Theorem A.

\begin{theorem} \label{TTT} Let  $1 < p \le q < \infty$ and $1<r< \infty$.
Let weights $\mu$ and $\nu$ satisfy, for any $\lambda\ge 1$,
\begin{eqnarray}\label{1zv}
\mu^*(\lambda t)\lesssim \frac{\mu^*(t)}{\lambda^\alpha},\qquad t>0,
\end{eqnarray}
\begin{eqnarray}\label{2zv}
\nu^*(\lambda t)\lesssim \frac{\nu^*(t)}{\lambda^\beta},\qquad t>0,
\end{eqnarray}
for some $\alpha\ge 0$ and $\beta\ge 0$ such that
\begin{eqnarray}\label{25zv}
\alpha+ 1/r+1/p>1,   \qquad \beta+ 1/r+1/q'>1.
\end{eqnarray}
Then
a sufficient condition for
\begin{eqnarray}\label{zvezda}
L^p (\mu^{-1}) *L^{r,\infty}\subset L^q(\nu)
\end{eqnarray}
to hold is
\begin{eqnarray}\label{21}
\mathcal{G} :=\sup\limits_{|E|=|W|}\frac{\nu(E) \mu(W)}{|E|^{1+\frac1r+\frac1p - \frac1q }}< \infty,
\end{eqnarray}
where the supremum is taken over all measurable sets $E$ and $W$ of the same positive measure.
A necessary condition for (\ref{zvezda}) is
\begin{eqnarray}\label{21-1}
\mathcal{S} :=\sup\limits_{|E|=|W|}\frac{\nu(E) \mu(W)}{|E|^{1+\frac1p - \frac1q}|E-W|^{\frac1r}}< \infty,
\end{eqnarray}
where the supremum is taken over all measurable sets $E$ and $W$ of the same positive measure.
\end{theorem}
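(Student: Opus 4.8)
\emph{Necessity} is the easier half, and I would obtain it by testing the embedding on a single family of elementary functions. Fix sets $E,W$ with $0<|E|=|W|<\infty$ and, after truncating the weights if needed, with $\mu(W),\nu(E)<\infty$, and put
\[
 f=\mu\,\chi_W,\qquad K=\chi_{E-W},\qquad g=\nu\,\chi_E .
\]
A direct computation gives $\|f\|_{L^p(\mu^{-1})}=|W|^{1/p}$, $\|K\|_{L^{r,\infty}}=|E-W|^{1/r}$ and $\|g\|_{L^{q'}(\nu^{-1})}=|E|^{1/q'}$. The key point is that for every $x\in E$ and $y\in W$ one has $x-y\in E-W$, so that $(f*K)(x)=\int_W\mu=\mu(W)$ for all $x\in E$. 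Pairing with $g$ and using the duality $\|h\|_{L^q(\nu)}=\sup\{\,|\int h g|:\|g\|_{L^{q'}(\nu^{-1})}\le1\,\}$ together with the assumed embedding yields
\[
 \mu(W)\,\nu(E)=\int (f*K)\,g\le \|f*K\|_{L^q(\nu)}\,\|g\|_{L^{q'}(\nu^{-1})}\le C\,|W|^{1/p}\,|E-W|^{1/r}\,|E|^{1/q'} .
\]
Since $|W|=|E|$ and $1/q'=1-1/q$, the right-hand side equals $C\,|E|^{1+1/p-1/q}|E-W|^{1/r}$, which is precisely $\mathcal S\le C$, giving (\ref{21-1}).

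\emph{Sufficiency} is the substantial direction, and here I would pass to the dual bilinear form. Assuming $f,K\ge0$ and normalizing $\|K\|_{L^{r,\infty}}=1$ (so $K^*(t)\le t^{-1/r}$), duality gives
\[
 \|f*K\|_{L^q(\nu)}=\sup_{\|g\|_{L^{q'}(\nu^{-1})}\le1}\ \int\!\!\int K(x-y)\,f(y)\,g(x)\,dy\,dx ,
\]
with $g\ge0$. Writing $\tilde f=f\mu^{-1}$ and $\tilde g=g\nu^{-1}$ converts the right-hand side into $\int\!\int K(x-y)\mu(y)\nu(x)\tilde f(y)\tilde g(x)$ with $\|\tilde f\|_p\le1$, $\|\tilde g\|_{q'}\le1$, and the task becomes to bound this by $C\mathcal G$. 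The plan is to discretize all three functions by their level sets, $K\approx\sum_k2^k\chi_{\Omega_k}$ with $|\Omega_k|\lesssim2^{-kr}$, $\tilde f\approx\sum_m2^m\chi_{F_m}$ and $\tilde g\approx\sum_\ell2^\ell\chi_{G_\ell}$, and to reduce the estimate to the elementary building block
\[
 B_{\Omega}(\mu\chi_F,\nu\chi_G)=\int\!\!\int\chi_{\Omega}(x-y)\,\mu(y)\chi_F(y)\,\nu(x)\chi_G(x)\,dy\,dx .
\]
This is exactly where the hypothesis $\mathcal G<\infty$ enters. Bounding the inner integral $\int_{x-\Omega}\mu$ by the largest $\mu$-measure of a set of measure $|\Omega|$, namely $\int_0^{|\Omega|}\mu^*$, and symmetrically for $\nu$, one obtains
\[
 B_{\Omega}(\mu\chi_F,\nu\chi_G)\le\min\big\{\mu(F)\nu(G),\ \nu(G)\!\textstyle\int_0^{|\Omega|}\!\mu^*,\ \mu(F)\!\int_0^{|\Omega|}\!\nu^*\big\},
\]
and the reformulation of (\ref{21}) as $\big(\int_0^t\mu^*\big)\big(\int_0^t\nu^*\big)\lesssim\mathcal G\,t^{\,1+1/r+1/p-1/q}$ lets one interpolate between the last two bounds to produce the correct power of $|\Omega_k|$. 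Feeding these into the triple sum $\sum_{k,m,\ell}2^{k+m+\ell}B_{\Omega_k}(\mu\chi_{F_m},\nu\chi_{G_\ell})$ should then reproduce $\|\tilde f\|_p\|\tilde g\|_{q'}$ up to the factor $\mathcal G$.

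The main obstacle is the convergence and bookkeeping of this multi-indexed series, and this is exactly what the remaining hypotheses are for. The regularity conditions (\ref{1zv})--(\ref{2zv}) give $\int_0^t\mu^*\asymp t\mu^*(t)$ and $\int_0^t\nu^*\asymp t\nu^*(t)$ and control how these averages rescale under the dyadic substitution $t\mapsto2^{-kr}$, while the strict inequalities (\ref{25zv}), $\alpha+1/r+1/p>1$ and $\beta+1/r+1/q'>1$, are precisely the thresholds that render the two resulting geometric series in $k$ summable. I expect the delicate part to be matching the level-set measures $|F_m|,|G_\ell|,|\Omega_k|$ and deciding, for each admissible triple, which of the three bounds on $B_\Omega$ is sharp; once the building-block estimate and the two convergence conditions are in place, the rest of the summation is routine.
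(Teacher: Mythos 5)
Your necessity argument is correct and complete; it is essentially the paper's own test (the paper uses the normalized kernel $|E-W|^{-1/r}\chi_{E-W}$ against the same pair of sets), except that you run it through direct H\"older duality rather than through the bilinear characterization of $\|\cdot\|_{L^{p,1}\to L^{q,\infty}}$ cited from \cite{NT}, which makes that half more self-contained. The genuine gap is in sufficiency, which is the substantial half of the theorem: what you present there is a framework, and the entire difficulty lives in the steps you defer as ``routine.'' Two of the claims you lean on are moreover not right as stated. First, (\ref{1zv}) does \emph{not} give $\int_0^t\mu^*\asymp t\,\mu^*(t)$: that condition is only an upper bound on decay (take $\mu^*(t)=e^{-t}$, $\alpha=0$; then $\int_0^t\mu^*\to 1$ while $t\mu^*(t)\to 0$). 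What is true, and what one actually needs, is the rescaling bound $\int_0^{\lambda t}\mu^*\lesssim \lambda^{1-\alpha}\int_0^t\mu^*$ for $\lambda\ge 1$. Second, the hypothesis $\mathcal G<\infty$ controls the product $\bigl(\int_0^t\mu^*\bigr)\bigl(\int_0^t\nu^*\bigr)$ only at \emph{equal} scales, whereas your three building-block bounds pair $\mu$ and $\nu$ at the mismatched scales $|F_m|,|G_\ell|,|\Omega_k|$. Naive ways of matching scales fail: for instance, replacing both scales in each product by the larger one and then applying the hypothesis gives $B\lesssim \mathcal G\,\mathrm{med}\bigl(|F_m|,|G_\ell|,|\Omega_k|\bigr)^{1+\frac1r+\frac1p-\frac1q}$, and already in the unweighted case ($\mu=\nu=1$, $f=g=\chi_{[0,1]}$) this makes the sum over $k$ diverge. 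Making the scale transfer work is exactly the content of the paper's Lemmas 2 and 3, which combine the regularity (\ref{1zv})--(\ref{2zv}) with the strict inequalities (\ref{25zv}) to sum the resulting geometric series; your ``interpolate between the last two bounds'' does not yet contain that mechanism.

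There is also a structural issue you do not address. Because you keep $f$ and $g$ general and decompose all three functions, your triple sum will naturally produce quantities of the form $\sum_m 2^m|F_m|^{1/p}$ and $\sum_\ell 2^\ell|G_\ell|^{1/q'}$, i.e.\ $L^{p,1}$- and $L^{q',1}$-norms, not the strong norms $\|\tilde f\|_p$, $\|\tilde g\|_{q'}$; recovering strong type requires either a careful H\"older argument across dyadic scales or an interpolation step, neither of which appears in your plan. The paper sidesteps this entirely: it proves only the restricted weak-type bound $L^{p,1}\to L^{q,\infty}$, for which $f$ and $g$ are characteristic functions, so there is no triple sum at all --- Lemma 1 reduces the bilinear form to $\int_0^{|E|}\int_0^{|W|}\nu^*(t)\mu^*(s)K^{**}(\max(s,t))\,ds\,dt$ --- and then upgrades to strong type by real interpolation between two exponent pairs $(p_0,q_0)$, $(p_1,q_1)$ with the same value of $1/p-1/q$, which is precisely where the strictness of (\ref{25zv}) is consumed. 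So while your overall scheme could plausibly be completed, the parts you label routine are the theorem's actual content, and as written the sufficiency direction is not proved.
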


\begin{remark}\label{C3}
 Note that conditions (\ref{1zv}) and (\ref{2zv}) are fulfilled for arbitrary weights $\mu$ and $\nu$ with $\alpha=\beta=0$.
Hence, the statement of Theorem \ref{TTT} holds for arbitrary $\mu$ and $\nu$ if
$1/r+1/p>1$ and $1/r+1/q'>1$, cf. (\ref{25zv}). Similar remarks can be applied to Theorem \ref{T} and Corollary \ref{C2} below.
Let us also note that if a function $\psi$
is  $\delta$-regularly varying  (see \cite[Sect. 2]{bir}), that is,
it satisfies $\psi(\lambda t)\lesssim \frac {\psi(t)}{\lambda^{\delta}}$, then it also satisfies
$\psi^*(\lambda t)\lesssim \frac {\psi^*(t)}{\lambda^{\delta}}$.
\end{remark}




For
 the power weights $\mu(x)=|x|^{-\alpha}$ and $\nu(x)=|x|^{-\beta}$, Theorem \ref{TTT} implies
necessary and sufficient conditions for
(\ref{zvezda}) to hold.
\begin{corollary}\label{cor-kerman}
Suppose  $1 < p \le q < \infty$ and $1<r< \infty$.
Then
\begin{eqnarray}\label{emb2}
L^{p}_\alpha(\Bbb R^n)*L^{r,\infty}(\Bbb R^n)\subset L^{q}_{-\beta}(\Bbb R^n)
\end{eqnarray}
if and only if
\begin{eqnarray}\label{emb2-cond1}
\frac{1}{q}=\frac{1}{p}+\frac{1}{r}+\frac{\alpha+\beta}{n}-1,
\quad 0\le \alpha<n/p',\quad  0\le \beta<n/q.
\end{eqnarray}
In particular, this implies the statement of Theorem A in case when $\alpha,\beta,s\ge 0$.
\end{corollary}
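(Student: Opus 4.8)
The plan is to obtain both directions from Theorem \ref{TTT}, proving the sufficiency by verifying its hypotheses and handling the necessity of the sharp exponent bounds by separate, elementary arguments. The computational backbone is the observation that for the power weights $\mu(x)=|x|^{-\alpha}$ and $\nu(x)=|x|^{-\beta}$ one has $\mu^*(t)\asymp t^{-\alpha/n}$ and $\nu^*(t)\asymp t^{-\beta/n}$; hence the regularity hypotheses (\ref{1zv}) and (\ref{2zv}) hold with exponents $\alpha/n$ and $\beta/n$, which are $\ge 0$ precisely when $\alpha,\beta\ge 0$.

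For sufficiency I would assume (\ref{emb2-cond1}) and check every hypothesis of Theorem \ref{TTT}. Since $0\le\alpha,\beta<n$, the bathtub principle says the centered ball maximizes the weighted measure of a set of given volume, giving $\sup_{|E|=t}\nu(E)\asymp t^{1-\beta/n}$ and $\sup_{|W|=t}\mu(W)\asymp t^{1-\alpha/n}$. Substituting these into (\ref{21}) yields $\mathcal{G}\asymp\sup_{t>0}t^{\,1-(\alpha+\beta)/n-1/r-1/p+1/q}$, which is finite exactly under the homogeneity relation in (\ref{emb2-cond1}); thus $\mathcal{G}<\infty$. Finally, after substituting the homogeneity relation, the two inequalities in (\ref{25zv}) become exactly $\beta<n/q$ and $\alpha<n/p'$, both assumed. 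Theorem \ref{TTT} then gives (\ref{emb2}).

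For necessity I would argue directly. The homogeneity relation comes from a dilation argument: applying (\ref{emb2}) to $f(\cdot/\delta)$ and $K(\cdot/\delta)$ and using $\|f(\cdot/\delta)\|_{L^p_\alpha}=\delta^{n/p+\alpha}\|f\|_{L^p_\alpha}$, $\|K(\cdot/\delta)\|_{L^{r,\infty}}=\delta^{n/r}\|K\|_{L^{r,\infty}}$, and $f(\cdot/\delta)*K(\cdot/\delta)=\delta^{n}(f*K)(\cdot/\delta)$, one sees that the powers of $\delta$ on the two sides must coincide, which is precisely the homogeneity relation. To force $\alpha,\beta\ge 0$, I would take $f=f_0(\cdot-a)$ and $K=K_0(\cdot-b)$ for fixed nonnegative bumps, so that $f*K=(f_0*K_0)(\cdot-a-b)$; then (\ref{emb2}) reduces to $|a+b|^{-\beta}\lesssim|a|^{\alpha}$ for all large $|a|,|a+b|$, and letting $|a+b|\to\infty$ with $|a|$ fixed yields $\beta\ge 0$, while letting $|a|\to\infty$ with $|a+b|$ fixed yields $\alpha\ge 0$.

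The main obstacle is the necessity of the strict upper bounds $\alpha<n/p'$ and $\beta<n/q$: these are sharper than what the general necessary condition (\ref{21-1}) provides, because $\mathcal{S}\le\mathcal{G}$ and, for power weights, $\mathcal{G}<\infty$ already holds under the weaker range $0\le\alpha,\beta<n$, so (\ref{21-1}) cannot detect the sharp exponents. I would establish them by hand. For $\beta<n/q$, choose smooth nonnegative bumps $f,K$ supported away from the origin but with $(f*K)(0)>0$; then $f*K$ is continuous and bounded below near the origin, so $\|f*K\|_{L^q_{-\beta}}^q\gtrsim\int_{|x|<\varepsilon}|x|^{-q\beta}\,dx$, which diverges once $\beta\ge n/q$, contradicting the finiteness of the right-hand side of (\ref{emb2}). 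The bound $\alpha<n/p'$ follows by duality: writing $\int(f*K)g=\int f\,(g*\tilde K)$ with $\tilde K(z)=K(-z)$ and using $(L^p_\alpha)'=L^{p'}_{-\alpha}$, $(L^q_{-\beta})'=L^{q'}_{\beta}$, the embedding (\ref{emb2}) is equivalent to $L^{q'}_\beta*L^{r,\infty}\subset L^{p'}_{-\alpha}$, to which the previous bump argument applies with the roles of the exponents interchanged, giving $\alpha<n/p'$. Collecting the homogeneity relation together with $0\le\beta<n/q$ and $0\le\alpha<n/p'$ yields (\ref{emb2-cond1}).
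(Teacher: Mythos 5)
Your proof is correct, and while the sufficiency half coincides with the paper's, the necessity half takes a genuinely different route. For sufficiency, both you and the paper verify the hypotheses of Theorem \ref{TTT} for $\mu(x)=|x|^{-\alpha}$, $\nu(x)=|x|^{-\beta}$: conditions (\ref{1zv})--(\ref{2zv}) hold with exponents $\alpha/n,\beta/n$, finiteness of $\mathcal{G}$ is equivalent to the homogeneity relation, and under that relation (\ref{25zv}) becomes exactly $\beta<n/q$ and $\alpha<n/p'$. For necessity, the paper obtains the homogeneity relation and the strict bounds $\alpha<n/p'$, $\beta<n/q$ in one stroke by specializing $K(x)=|x|^{\gamma-n}$ with $1/r=1-\gamma/n$ (so $K\in L^{r,\infty}$) and citing Duoandikoetxea's necessity theorem for the Stein--Weiss inequality, \cite[Th. 5.1]{duo}; it then proves $\alpha,\beta\ge 0$ via the bilinear characterization (\ref{operator-ab}) of the $L^{p,1}\to L^{q,\infty}$ norm from \cite{NT}, tested on $K=\chi_{B_2(\overline{N})}$ with $E=B_1(\overline{0})$, $W=B_2(\overline{N})$, $N\to\infty$ --- essentially the same far-away-bump idea as yours, phrased through sets rather than translated functions. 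You instead prove all four conditions from scratch: dilation for homogeneity, translated bumps for $\alpha,\beta\ge 0$, a bump pair with $(f*K)(0)>0$ for $\beta<n/q$, and the duality $(L^p_\alpha)'=L^{p'}_{-\alpha}$, $(L^q_{-\beta})'=L^{q'}_{\beta}$ to transfer that argument to $\alpha<n/p'$. What your approach buys is self-containedness (no reliance on \cite{duo} or on the machinery behind (\ref{operator-ab})) at the cost of length; the only point deserving care is the duality step, where Fubini--Tonelli should be invoked for nonnegative functions, which is all your bump test requires (alternatively, $\alpha<n/p'$ follows directly by convolving a bump kernel with an $f\in L^p_\alpha$ that fails to be locally integrable near the origin, avoiding duality altogether). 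Your side remark that (\ref{21-1}) cannot detect the strict bounds is also correct --- indeed $\mathcal{S}\le\mathcal{G}$ since $|E-W|\ge|E|$, and $\mathcal{G}\asymp 1$ for power weights under homogeneity and $0\le\alpha,\beta<n$ --- which shows that the paper's parenthetical claim that these conditions ``can be also verified using (\ref{21-1})'' is accurate only for the homogeneity relation, not for $\alpha<n/p'$ and $\beta<n/q$.
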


If $\alpha=\beta=0$, (\ref{emb2}) is O'Neil's inequality
$L^{p}(\Bbb R^n)*L^{r,\infty}(\Bbb R^n)\subset L^{q}(\Bbb R^n)$, $\frac{1}{q}=\frac{1}{p}+\frac{1}{r}-1$.
 Moreover, (\ref{emb2}) gives the Stein-Weiss inequality
 (\ref{stein-weiss}) under conditions (\ref{stein-weiss1}) and
   $\alpha,\beta\ge 0$.

\vskip 0.6 cm

Let us now study Problem 2.
 The following result gives upper and lower norm estimates  of
the weighted potential operator $A_\gamma$ in the Lorentz spaces.

\begin{theorem} \label{T} Let  $1 < p, q < \infty$, $0 < \tau_1\leq\tau_2 \le \infty$, and $0<\gamma<n$.
\\
{\textnormal{(A)}}.\; Let
the weights $\mu$ and $\nu$ satisfy conditions (\ref{1zv}) and (\ref{2zv})
for some $\alpha\ge 0$ and $\beta\ge 0$ such that
\begin{eqnarray}\label{26zv}
\alpha+ 1/p>\gamma/ n,   \qquad \beta+ 1/q'>\gamma/ n.
\end{eqnarray}
 If
\begin{eqnarray}\label{1.2}
\mathcal{L}:=\sup\limits_{|E|=|W|}\frac{\nu(E) \mu(W)}{|E|^{2+\frac1p - \frac1q -\frac{\gamma}{n}}}< \infty,
\end{eqnarray}
where the supremum is taken over all measurable sets $E$ and $W$ of the same positive measure,
 then $A_\gamma$ is bounded from $L^{p,\tau_1}(\Bbb R^n)$ to $L^{q,\tau_2}(\Bbb R^n)$
and
$$
\|A_\gamma\|_{L^{p,\tau_1}(\Bbb R^n) \to L^{q,\tau_2}(\Bbb R^n)} \le C \mathcal{L},
$$
where $C$ depends only on parameters  $p, q$,  $\tau_1$,   $\tau_2$, $\alpha$, $\beta$, $\gamma$, and $n$.
\\
{\textnormal{(B)}}.\;
If the operator $A_\gamma$ is bounded from $L^{p,\tau_1}(\Bbb R^n)$ to $L^{q,\tau_2}(\Bbb R^n)$, then
\begin{eqnarray}\label{1.3}
\qquad
\mathcal{F}:=\sup\limits_{B\in M} \frac{\nu(B) \mu(B)}{|B|^{2+\frac{1}{ p}-\frac{1}{
q}-\frac \gamma n }}\le \,C\, \|A_\gamma\|_{L^{p,\tau_1}(\Bbb R^n) \to L^{q,\tau_2}(\Bbb R^n)},
\end{eqnarray}
where
$M$ is  the collection of all balls in $\Bbb R^n$ and
$C$ depends only on parameters  $p, q$,  $\tau_1$,   $\tau_2$, $\alpha$, $\beta$, $\gamma$, and $n$.
\end{theorem}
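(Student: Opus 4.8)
The plan is to handle the two parts by different mechanisms: Part (A) by a rearrangement (O'Neil-type) estimate that yields the Lorentz bound directly, and Part (B) by testing $A_\gamma$ on characteristic functions of balls. For Part (A) I would first record that the Riesz kernel $K(z)=|z|^{\gamma-n}$ has $K^*(s)\asymp s^{\gamma/n-1}$, so that $K\in L^{r,\infty}(\mathbb{R}^n)$ with $\tfrac1r=1-\tfrac\gamma n$. Under this identification the balance conditions (\ref{26zv}) become exactly $\alpha+1/r+1/p>1$ and $\beta+1/r+1/q'>1$, and the exponent $2+1/p-1/q-\gamma/n$ in (\ref{1.2}) equals $1+1/r+1/p-1/q$, so $\mathcal L$ coincides with the quantity $\mathcal G$ of Theorem \ref{TTT} for this kernel. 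Writing $A_\gamma f=\nu\cdot\big(K*(f\mu)\big)$ and assuming $f\ge 0$, I would chain three rearrangement inequalities: the submultiplicativity $(\varphi\psi)^*(t)\le\varphi^*(t/2)\psi^*(t/2)$ applied to $\nu\cdot\big(K*(f\mu)\big)$; then O'Neil's convolution inequality
\[
\big(K*h\big)^{**}(t)\lesssim t^{\gamma/n}h^{**}(t)+\int_t^\infty s^{\gamma/n-1}h^*(s)\,ds,\qquad h=f\mu;
\]
and finally submultiplicativity again for $(f\mu)^*$. After a harmless change of variables this produces the pointwise bound
\[
(A_\gamma f)^*(t)\lesssim\nu^*(t)\Big[t^{\gamma/n-1}\int_0^t f^*(s)\mu^*(s)\,ds+\int_t^\infty s^{\gamma/n-1}f^*(s)\mu^*(s)\,ds\Big].
\]

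It then remains to insert this into $\|A_\gamma f\|_{L^{q,\tau_2}}=\big(\int_0^\infty(t^{1/q}(A_\gamma f)^*(t))^{\tau_2}\frac{dt}{t}\big)^{1/\tau_2}$ and bound the two resulting weighted Hardy operators by $\|f\|_{L^{p,\tau_1}}$. The hypothesis $\tau_1\le\tau_2$ places us in the regime in which the weighted Hardy inequality is governed by a single Muckenhoupt-type supremum, and the crux is to show that, because of the decay conditions (\ref{1zv})--(\ref{2zv}), this supremum collapses to $\mathcal L$. Here I would use $\int_0^t\nu^*=t\,\nu^{**}(t)$ and $\int_0^t\mu^*=t\,\mu^{**}(t)$, so that $\mathcal L\asymp\sup_{t>0}t^{\gamma/n+1/q-1/p}\,\nu^{**}(t)\mu^{**}(t)$, while (\ref{1zv})--(\ref{2zv}) allow me to pass between $\mu^*,\nu^*$ and their averages and to reduce the a priori Hardy conditions (which involve integrals of powers of $\mu^*$ over $(t,\infty)$ and of $\nu^*$ over $(0,t)$) to this balanced expression. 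The balance conditions (\ref{26zv}) are precisely what guarantee convergence of the relevant integrals: $\alpha+1/p>\gamma/n$ controls the inner integral $\int_0^t s^{\gamma/n-1}f^*\mu^*$, and $\beta+1/q'>\gamma/n$ controls the $\nu$-weighted tail. I expect this reduction of the two Hardy inequalities to the single condition $\mathcal L<\infty$ to be the main obstacle.

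For Part (B) I would test on a ball $B\in M$ with $f=\chi_B$. Since $\|\chi_B\|_{L^{p,\tau_1}}\asymp|B|^{1/p}$, and for $x,y\in B$ one has $|x-y|\lesssim|B|^{1/n}$, it follows that $A_\gamma\chi_B(y)\ge c\,\nu(y)\,|B|^{\gamma/n-1}\mu(B)$ for every $y\in B$. Restricting to $B$ gives $\|A_\gamma\chi_B\|_{L^{q,\tau_2}}\gtrsim|B|^{\gamma/n-1}\mu(B)\,\|\nu\chi_B\|_{L^{q,\tau_2}}$. Using the embedding $L^{q,\tau_2}\hookrightarrow L^{q,\infty}$ together with the direct estimate $\nu(B)=\int_0^{|B|}(\nu\chi_B)^*(s)\,ds\le q'\,\|\nu\chi_B\|_{L^{q,\infty}}|B|^{1/q'}$, one obtains $\|\nu\chi_B\|_{L^{q,\tau_2}}\gtrsim\nu(B)|B|^{-1/q'}$. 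Combining these,
\[
\|A_\gamma\|_{L^{p,\tau_1}\to L^{q,\tau_2}}\,|B|^{1/p}\gtrsim\|A_\gamma\chi_B\|_{L^{q,\tau_2}}\gtrsim|B|^{\gamma/n-1}\mu(B)\,\nu(B)\,|B|^{-1/q'},
\]
and collecting the powers of $|B|$ via $1/p-\gamma/n+1+1/q'=2+1/p-1/q-\gamma/n$ yields $\mu(B)\nu(B)\lesssim\|A_\gamma\|\,|B|^{2+1/p-1/q-\gamma/n}$, which is precisely $\mathcal F\le C\,\|A_\gamma\|$. This part is routine once the two Lorentz-space facts (the norm of a characteristic function and the weak-type lower bound for $\nu(B)$) are in place; the genuine difficulty of the theorem lies in the Hardy-operator reduction of Part (A).
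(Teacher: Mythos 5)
Your Part (B) is correct and complete: testing on $f=\chi_B$, using $\|\chi_B\|_{L^{p,\tau_1}}\asymp|B|^{1/p}$, the kernel bound $|x-y|^{\gamma-n}\gtrsim|B|^{\gamma/n-1}$ on $B\times B$, and the estimate $\nu(B)\le q'\,|B|^{1/q'}\|\nu\chi_B\|_{L^{q,\infty}}$, and your exponent bookkeeping is right. It is in fact more elementary than the paper's argument for (B), which first passes to the $L^{p,1}\to L^{q,\infty}$ norm and invokes the bilinear characterization (\ref{operator-ab}) from \cite{NT} before restricting the bilinear form to $E=W=B$; you get the same conclusion by direct testing, with no external machinery.

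Part (A), however, has a genuine gap: the step you yourself label ``the crux'' and ``the main obstacle'' --- that the two weighted Hardy inequalities hold with constant $C\mathcal{L}$ --- is exactly the analytic content of the theorem, and you do not carry it out. (The pointwise O'Neil bound and the identity $\mathcal{L}\asymp\sup_{t>0}t^{\gamma/n+1/q-1/p}\nu^{**}(t)\mu^{**}(t)$ are the routine parts.) Two real difficulties are hidden in the deferral. First, the Muckenhoupt-type conditions do not decouple into a condition on $\mu$ and a condition on $\nu$: for the operator $t\mapsto t^{1/q+\gamma/n-1}\nu^*(t)\int_0^t f^*\mu^*$ the condition contains the factor $\bigl(\int_0^r\bigl(s^{1/p'}\mu^*(s)\bigr)^{\tau_1'}\frac{ds}{s}\bigr)^{1/\tau_1'}$, which need not be finite under the hypotheses on $\mu$ alone (already $\mu^*(s)=s^{-1/p'}$ satisfies (\ref{1zv}) and (\ref{26zv}) and makes it diverge). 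To make the supremum collapse to $\mathcal{L}$ you must keep the two weights entangled: insert $\mu^*(s)\le\mu^{**}(s)\le\mathcal{L}\,s^{1/p-1/q-\gamma/n}/\nu^{**}(s)$ inside the integral, then use (\ref{2zv}) in the form $1/\nu^*(s)\lesssim(s/r)^{\beta}/\nu^*(r)$ for $s\le r$, with the strict inequality $\beta+1/q'>\gamma/n$ of (\ref{26zv}) providing convergence; the dual Hardy operator is handled symmetrically with (\ref{1zv}) and $\alpha+1/p>\gamma/n$. This argument can be completed, but none of it appears in your proposal, and your attribution of which condition controls which term is also off. Second, your appeal to a ``single Muckenhoupt-type supremum'' is justified (Bradley's theorem) only for $1\le\tau_1\le\tau_2\le\infty$, whereas the theorem claims $0<\tau_1\le\tau_2\le\infty$; when $\tau_2<1$ the single-supremum criterion is not valid for general integrands, and you would need Hardy inequalities restricted to the cone of decreasing functions (Sawyer-type, with more than one condition), which you never address. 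The paper sidesteps both issues by a different route: it proves one bilinear weak-type estimate $\|A_\gamma\|_{L^{p,1}\to L^{q,\infty}}\lesssim\mathcal{D}\asymp\mathcal{L}$ via Lemmas \ref{L1}--\ref{L3} and (\ref{operator-ab}), does this for two pairs $(p_i,q_i)$ with the same $1/p-1/q$, and then obtains all Lorentz bounds simultaneously by Marcinkiewicz-type interpolation followed by the embedding $L^{q,\tau_1}\subset L^{q,\tau_2}$.
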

\begin{remark}\label{C3}
Comparing $\mathcal{S}$ and $\mathcal{F}$ defined by (\ref{21-1}) and (\ref{1.3}), we see that
$$
\mathcal{F}
\le
\sup\limits_{x\in \Bbb R^n, {{E}}\,\,\small{\mbox{convex}}}\frac{\nu(E) \mu(E+x)}{|E|^{1+\frac1r+\frac1p - \frac1q }}
\lesssim \mathcal{S}.
$$
\end{remark}
Taking $\tau_1=p$ and $\tau_2=q$ in Theorem \ref{T}, we get
$ \mathcal{F} \lesssim \|A_\gamma\|_{L^{p}(\Bbb R^n) \to L^{q}(\Bbb R^n)} \lesssim
\,\mathcal{L}.$
Moreover, under certain regularity conditions on  weights, the left-hand side bound $\mathcal{F}$ and the right-hand side bound $\mathcal{L}$
 are equivalent. In particular, this holds
 for radial weights $\psi(|x|)$ satisfying the following condition:
\begin{equation} \label{AA}
\psi^\ast(t) \lesssim \frac{1}{t} \int_{t/2}^{t} \psi(s)\,ds, \qquad t>0.
\end{equation}
Note that (\ref{AA}) holds for any monotonic function  $\psi$ on $(0,\infty)$.
It is easy to give an example  of a non-monotonic function satisfying condition (\ref{AA}), for example,
$$
\psi(t)=\frac {|\cos n t|}{t^\alpha}, \qquad t>0, \quad 0<\alpha<1.
$$
Theorem \ref{T} implies the following relation for the norm of the weighted potential operator.

\begin{corollary}\label{C2}  Let $p,  q, \gamma, \tau_1, \tau_2$ satisfy all conditions of Theorem \ref{T}.
Suppose functions $\nu_0(\cdot)$ and $\mu_0(\cdot)$ defined on $(0,\infty)$ satisfy condition (\ref{AA}). Then, for the operator
\begin{eqnarray*}
(A_\gamma f)(y) = (A_{\gamma,\mu,\nu}f)(y) =
\nu_0(|y|)\int\limits_{\Bbb R^n} \frac{f(x)\mu_0(|x|)}{ {|x-y|^{n-\gamma}}}dx,
\qquad y\in \Bbb{R}^n, \qquad 0<\gamma<n,
\end{eqnarray*}
one has
\begin{eqnarray}\label{26zv0-0}
\|A_\gamma \|_{L^{p,\tau_1}(\Bbb R^n)\to L^{q,\tau_2}(\Bbb R^n)} \asymp
\sup\limits_{a>0} \frac1{a^{n(2+\frac{1}{ p}-\frac{1}{
q}-\frac \gamma n )}} \int\limits_{0}^{a} \nu_0(r)r^{n-1}dr\int\limits_{0}^{a}\mu_0(r)r^{n-1}
dr.
\end{eqnarray}
In particular, if $1<p\le q<\infty$, then
\begin{eqnarray*}
\|A_\gamma \|_{L^{p}(\Bbb R^n)\to L^{q}(\Bbb R^n)} \asymp
\sup\limits_{a>0} \frac1{a^{n(2+\frac{1}{ p}-\frac{1}{
q}-\frac \gamma n )}} \int\limits_{0}^{a} \nu_0(r)r^{n-1}dr\int\limits_{0}^{a}\mu_0(r)r^{n-1}
dr.
\end{eqnarray*}
\end{corollary}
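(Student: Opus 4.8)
The plan is to combine the two-sided estimates of Theorem \ref{T} with the radial structure of the weights. Write $\kappa:=2+\frac1p-\frac1q-\frac\gamma n$, let $c_n=|B_1(\overline 0)|$ and $\omega_{n-1}=|\mathbb{S}^{n-1}|$, and denote by $\mathcal R$ the right-hand side of \eqref{26zv0-0}. By Theorem \ref{T} we already have $\mathcal F\lesssim\|A_\gamma\|_{L^{p,\tau_1}\to L^{q,\tau_2}}\lesssim\mathcal L$, together with the trivial bound $\mathcal F\le\mathcal L$. Hence it suffices to prove the two inequalities $\mathcal R\lesssim\mathcal F$ and $\mathcal L\lesssim\mathcal R$, which pinch everything together and yield $\|A_\gamma\|_{L^{p,\tau_1}\to L^{q,\tau_2}}\asymp\mathcal R$.

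The bound $\mathcal R\lesssim\mathcal F$ is immediate: testing the supremum defining $\mathcal F$ on the centered balls $E=W=B_a(\overline 0)$ and passing to polar coordinates, one has $\nu(B_a(\overline 0))=\omega_{n-1}\int_0^a\nu_0(r)r^{n-1}\,dr$ and $|B_a(\overline 0)|=c_na^n$, so
$$\frac{\nu(B_a(\overline 0))\,\mu(B_a(\overline 0))}{|B_a(\overline 0)|^{\kappa}}\asymp\frac{1}{a^{n\kappa}}\int_0^a\nu_0(r)r^{n-1}\,dr\int_0^a\mu_0(r)r^{n-1}\,dr,$$
and taking the supremum over $a>0$ gives exactly $\mathcal R\lesssim\mathcal F$. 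For the reverse inequality I first reduce $\mathcal L$ to a one-dimensional quantity. By the Hardy--Littlewood rearrangement inequality $\sup_{|E|=t}\nu(E)=\int_0^t\nu^*(s)\,ds$, so $\mathcal L=\sup_{t>0}t^{-\kappa}\big(\int_0^t\nu^*\big)\big(\int_0^t\mu^*\big)$. Setting $g(u)=\nu_0((u/c_n)^{1/n})$ and $g_\mu(u)=\mu_0((u/c_n)^{1/n})$, the substitution $u=c_nr^n$ gives $\int_0^tg=\nu(B_a(\overline 0))$ with $c_na^n=t$, whence $\mathcal R\asymp\sup_{t>0}t^{-\kappa}\big(\int_0^t g\big)\big(\int_0^t g_\mu\big)$, while at the same time $\nu^*=g^*$ and $\mu^*=g_\mu^*$. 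Thus the whole corollary is reduced to the one-variable inequality $\int_0^t g^*(s)\,ds\lesssim\int_0^t g(s)\,ds$ for all $t>0$, together with its analogue for $g_\mu$.

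This last inequality is the heart of the matter, and it is exactly here that condition \eqref{AA} enters. Its meaning is that a \emph{centered} ball essentially maximizes the weighted measure among all sets of a given volume; without a regularity hypothesis this fails, since a thin far-away annulus on which $\nu_0$ spikes can carry more weight than any centered ball of equal volume. The plan is to transfer \eqref{AA} from $\nu_0$ to $g$: a direct computation with $u=c_nr^n$ turns the local average $\frac1s\int_{s/2}^s g(u)\,du$ into an average of $\nu_0$ over a radial interval $[cb,b]$ with $b=(s/c_n)^{1/n}$ and $c=2^{-1/n}$, so that \eqref{AA} for $\nu_0$ should yield the averaged bound $g^*(s)\lesssim\frac1s\int_{s/2}^s g(u)\,du$; once this is available, Fubini's theorem (Hardy's inequality) gives $\int_0^t g^*\lesssim\int_0^t g$. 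The main obstacle is precisely this transfer step: condition \eqref{AA} is stated in terms of the ordinary (Lebesgue) decreasing rearrangement of the radial profile $\nu_0$, whereas $\int_0^t\nu^*$ is governed by rearrangement with respect to the radial measure $r^{n-1}\,dr$, and reconciling the two through the nonlinear substitution $u=c_nr^n$ — equivalently, showing that \eqref{AA} forces the super-level sets of $\nu_0$ to be comparable to intervals $[0,b]$ — is the point demanding care.
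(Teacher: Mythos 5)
Your reduction is correct and is in fact the same skeleton as the paper's own proof: the paper likewise sandwiches $\mathcal{F}\lesssim\|A_\gamma\|\lesssim\mathcal{L}$ via Theorem \ref{T}, gets $\mathcal{R}\lesssim\mathcal{F}$ from centered balls, and reduces everything to the claim that centered balls essentially maximize weighted measure among sets of given volume (its inequalities (\ref{df})--(\ref{dff})), which in your notation is exactly $\int_0^t g^*\lesssim\int_0^{Ct}g$. Your observation that $\nu^*=g^*$ (equimeasurability through $u=c_nr^n$) is right, and so is the Fubini step that would finish the argument once $g^*(s)\lesssim s^{-1}\int_{s/2}^s g$ is known. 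But the one step you leave open --- transferring (\ref{AA}) from $\nu_0$ to $g$ --- is the entire analytic content of the corollary, so the proposal has a genuine gap, not a verification safely left to the reader.

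Moreover, for $n\ge 2$ that step cannot be filled as envisioned: condition (\ref{AA}) for $\nu_0$ does \emph{not} imply $\int_0^t g^*\lesssim\int_0^{Ct}g$. Take $\nu_0=\chi_{[0,\varepsilon]}+\chi_{[R,R+\varepsilon/2]}$ with $R\gg\varepsilon$ (this weight is admissible for the corollary, since (\ref{1zv})--(\ref{2zv}) hold with $\alpha=\beta=0$). Then $\nu_0^*=\chi_{[0,3\varepsilon/2)}$, and (\ref{AA}) holds with an absolute constant: for $t\le 3\varepsilon/2$ the set $[t/2,t]\cap[0,\varepsilon]$ has measure $\gtrsim t$, and for larger $t$ the left-hand side vanishes. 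Yet the far annulus $E=\{R\le|x|\le R+\varepsilon/2\}$ gives $\int_E\nu_0(|x|)\,dx=|E|\asymp R^{n-1}\varepsilon$, while any centered ball of volume $C^n|E|$ has radius $\asymp C(R^{n-1}\varepsilon)^{1/n}\ll R$ and hence weighted measure at most $c_n\varepsilon^n$; the ratio is $\asymp(R/\varepsilon)^{n-1}\to\infty$ as $R\to\infty$, with the (\ref{AA}) constant fixed. Equivalently, $g$ has a plateau of length $\asymp R^{n-1}\varepsilon$ far from the origin, so $\int_0^tg^*\asymp t$ while $\int_0^{Ct}g\asymp\varepsilon^n$ for $t\asymp R^{n-1}\varepsilon$. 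The root cause is that the true pointwise relation between the two rearrangements is $\bigl(\nu_0(|\cdot|)\bigr)^*(t)\ \ge\ \nu_0^*\bigl((t/c_n)^{1/n}\bigr)$, the \emph{opposite} of what is needed: (\ref{AA}) constrains the location of the mass of $\nu_0$ only in Lebesgue measure, and a set of tiny Lebesgue measure far from the origin carries enormous $r^{n-1}dr$-measure. You should also know that the paper's own proof asserts precisely the reversed inequality (its first step $\int_0^{a^n}(\nu_0(|\cdot|))^*(t)\,dt\lesssim\int_0^{a_1^n}\nu_0^*(t^{1/n})\,dt$ in the proof of (\ref{df}) fails for the weight above), so the obstacle you flagged is real and is a flaw in the paper, not just in your write-up. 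To close the argument one must strengthen the hypothesis --- e.g., assume $\nu_0$ non-increasing, or impose (\ref{AA}) on the volume-variable profile $g$ itself (equivalently, rearrangement with respect to $r^{n-1}dr$) --- and then your scheme goes through verbatim; for $n=1$ the substitution is linear and your argument is already complete.
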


In the case of the power weights $\mu(x)=|x|^{-\alpha}$ and $\nu(x)=|x|^{-\beta}$ we obtain necessary and sufficient conditions for boundedness of the operator $A_\gamma$.

\begin{corollary}\label{C111} Let $1 < p , q < \infty$, $\alpha\geq 0$, and $\beta\geq 0$.
\\
{\textnormal{ (A)}.} Let either $0< \tau_1\leq\tau_2<\infty$ or $1<\tau_1\leq\tau_2\leq\infty$. Then the operator
$$
(A_{\gamma, \alpha, \beta} f)(y) =
\int\limits_{\Bbb R^n} \frac{f(x)} {|x|^\alpha|x-y|^{n-\gamma}|y|^\beta}dx,
\qquad y\in \Bbb{R}^n, \qquad 0<\gamma<n,
$$
is bounded from $L^{p,\tau_1}(\Bbb R^n)$ to $L^{q,\tau_2}(\Bbb R^n)$,  if and only if
\begin{eqnarray}\label{26zv0}
\gamma=\alpha+\beta+n\left(\frac1p-\frac1q\right), \qquad \alpha< \frac n{p'},\qquad \beta <\frac nq.
\end{eqnarray}
{\textnormal{ (B)}.} Let
$0< \tau\le 1.$
The operator $A_{\gamma, \alpha, \beta}$
is bounded from $L^{p,\tau}(\Bbb R^n)$ to $L^{q,\infty}(\Bbb R^n)$ if and only if
\begin{eqnarray}\label{26zv1}
\gamma=\alpha+\beta+n\left(\frac1p-\frac1q\right), \qquad \alpha\le \frac n{p'},\qquad \beta \le\frac nq.
\end{eqnarray}
\end{corollary}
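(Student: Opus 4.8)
The plan is to deduce Corollary \ref{C111} from the two-sided estimates of Theorem \ref{T} together with the sharp radial formula of Corollary \ref{C2}, after translating everything into the power-weight language. First I would record the elementary facts about $\mu(x)=|x|^{-\alpha}$ and $\nu(x)=|x|^{-\beta}$: being radially decreasing, their rearrangements are $\mu^*(t)\asymp t^{-\alpha/n}$ and $\nu^*(t)\asymp t^{-\beta/n}$, so (\ref{1zv}) and (\ref{2zv}) hold with decay exponents $\alpha/n$ and $\beta/n$. Substituting these exponents into (\ref{26zv}) and using the scaling relation $\gamma=\alpha+\beta+n(1/p-1/q)$, a one-line computation shows that (\ref{26zv}) is \emph{equivalent} to the pair of strict inequalities $\alpha<n/p'$, $\beta<n/q$. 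This is the bridge that aligns the hypotheses of Theorem \ref{T} and Corollary \ref{C2} with (\ref{26zv0}).

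For the sufficiency in part (A), I would invoke Corollary \ref{C2}: power weights are monotone, hence satisfy (\ref{AA}), and in the strict regime $\alpha<n/p'$, $\beta<n/q$ all hypotheses of Theorem \ref{T} hold. The two radial integrals are elementary, $\int_0^a r^{n-1-\beta}\,dr\asymp a^{n-\beta}$ and $\int_0^a r^{n-1-\alpha}\,dr\asymp a^{n-\alpha}$, so the right-hand side of (\ref{26zv0-0}) equals $\sup_{a>0}a^{\,\gamma-\alpha-\beta-n/p+n/q}$, which is finite precisely when the exponent vanishes, i.e.\ under the scaling relation. Thus in the strict regime $A_{\gamma,\alpha,\beta}$ is bounded if and only if the scaling relation holds, giving both the sufficiency and the necessity of scaling in (A). For the endpoint sufficiency in part (B), I would use $L^{p,\tau}\hookrightarrow L^{p,1}$ for $\tau\le1$ and reduce to the weak-type endpoint estimate $A_{\gamma,\alpha,\beta}\colon L^{p,1}\to L^{q,\infty}$ at $\alpha=n/p'$ or $\beta=n/q$, which is the classical limiting form of the Stein--Weiss inequality (\ref{stein-weiss}).

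It remains to prove necessity. The scaling relation follows cleanly from the dilation identity $A_{\gamma,\alpha,\beta}D_\lambda=\lambda^{\alpha+\beta-\gamma}D_\lambda A_{\gamma,\alpha,\beta}$, where $D_\lambda f=f(\lambda\,\cdot\,)$, together with $\|D_\lambda f\|_{L^{p,\tau}}=\lambda^{-n/p}\|f\|_{L^{p,\tau}}$: the operator norm is $\lambda$-invariant only when $\alpha+\beta-\gamma-n/q+n/p=0$. (Alternatively, $\mathcal F<\infty$ from Theorem \ref{T}(B), tested on centered balls $B_r(\overline 0)$, forces the same relation and also $\alpha,\beta<n$.) The closed conditions $\alpha\le n/p'$, $\beta\le n/q$ come from probing the two singularities: testing on $f=\chi_{B_1(\overline N)}$ with $\overline N$ far from the origin gives $A_{\gamma,\alpha,\beta}f(y)\asymp|y|^{-\beta}$ near $0$, which must lie in $L^{q,\tau_2}$, forcing $\beta\le n/q$ (and $\beta<n/q$ once $\tau_2<\infty$); the dual statement, applied to the adjoint (again of the form $A_{\gamma,\beta,\alpha}$ with $(p,q)$ replaced by $(q',p')$), yields $\alpha\le n/p'$ (and $\alpha<n/p'$ when $\tau_1>1$).

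The main obstacle is to upgrade the closed inequalities to the strict ones throughout part (A), i.e.\ to exclude $\alpha=n/p'$ (and symmetrically $\beta=n/q$) whenever $\tau_2<\infty$ or $\tau_1>1$. At the borderline $\alpha=n/p'$ the natural test functions are the logarithmic profiles $f_s(x)=|x|^{-n/p}(\log\tfrac e{|x|})^{-s}\chi_{\{|x|<1\}}$, for which one computes $A_{\gamma,\alpha,\beta}f_s(y)\asymp\frac1{s-1}|y|^{-n/q}(\log\tfrac e{|y|})^{1-s}$ near the origin; tracking $\|f_s\|_{L^{p,\tau_1}}$ and $\|A_{\gamma,\alpha,\beta}f_s\|_{L^{q,\tau_2}}$ as $s$ decreases to its critical value exhibits unboundedness whenever $1/\tau_1-1/\tau_2<1$. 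The delicate case is $1/\tau_1-1/\tau_2\ge1$ with $\tau_2<\infty$, where a single logarithmic bump is inconclusive and one must build a multi-scale superposition $\sum_k c_k f_s(2^k\,\cdot\,)$ supported at widely separated scales, exploiting the strict inclusion $L^{p,\tau_1}\subsetneq L^{p,\tau_2}$ to blow up the $L^{q,\tau_2}$ norm of the image while keeping the $L^{p,\tau_1}$ norm of the input bounded. This endpoint analysis --- which separates the strict conditions (\ref{26zv0}) of part (A) from the closed conditions (\ref{26zv1}) of part (B), the latter being exactly the regime $\tau_1\le1$, $\tau_2=\infty$ where the $L^{q,\infty}$ norm does not detect the logarithmic loss --- is where the real work lies.
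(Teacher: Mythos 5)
Your sufficiency argument for part (A) via Corollary \ref{C2} is essentially the paper's, and your dilation argument for the scaling relation together with the ball test for $\beta$ is sound. The genuine gap is exactly the place you flag as ``where the real work lies'': excluding the endpoint $\alpha=n/p'$ (and symmetrically $\beta=n/q$) in part (A) in the regime $0<\tau_1\le 1$, $\tau_2<\infty$, where your adjoint trick degenerates because $(L^{p,\tau_1})'=L^{p',\infty}$ no longer detects the borderline loss. You leave this case to a speculative multi-scale superposition of logarithmic bumps which you do not construct, so the ``only if'' direction of (A) is incomplete as written. Moreover, the heavy machinery is unnecessary: the paper settles this case with the same elementary test functions $f=c\,|W|^{-1/p}\chi_W$, taking $W=B_1(\overline{3})$ (a unit ball away from the origin) and examining the image \emph{both} near the origin, where $A_{\gamma,\alpha,\beta}f(y)\asymp |y|^{-\beta}$ gives $\beta<n/q$, \emph{and} at infinity, where $A_{\gamma,\alpha,\beta}f(y)\asymp |y|^{\gamma-n-\beta}$; finiteness of the $L^{q,\tau_2}$ quasi-norm of this tail with $\tau_2<\infty$ forces the strict inequality $1/q<(n-\gamma+\beta)/n$, which under the scaling relation is precisely $\alpha<n/p'$. (At $\alpha=n/p'$ the tail is exactly $|y|^{-n/q}$, which lies in $L^{q,\infty}$ but in no $L^{q,\tau_2}$ with $\tau_2<\infty$ --- this is also why the conditions are closed in part (B) and strict in part (A).) No logarithmic profiles or multi-scale constructions are needed.

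A second, smaller gap is in the sufficiency half of part (B): after the correct reduction via $L^{p,\tau}\hookrightarrow L^{p,1}$, you cite the restricted weak-type estimate $A_{\gamma,\alpha,\beta}\colon L^{p,1}\to L^{q,\infty}$ at $\alpha=n/p'$ or $\beta=n/q$ as ``the classical limiting form of the Stein--Weiss inequality.'' This endpoint estimate is not a classical citation --- it is part of what the corollary asserts --- and the paper proves it from its own Lemma \ref{L1} and Lemma \ref{L3}: the hypotheses (\ref{4zv}) of Lemma \ref{L3} were stated with non-strict inequalities $\rho+\alpha+1/p\ge 1$, $\rho+\beta+1/q'\ge 1$ precisely so that, with $\rho=1-\gamma/n$ and the scaling relation, they reduce to $\alpha\le n/p'$, $\beta\le n/q$ and cover these endpoints, after which $\mathcal{D}<\infty$ follows by homogeneity. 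As written, your part (B) sufficiency rests on an unproved external claim rather than on the paper's lemmas (or any supplied argument).
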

In particular, for the Lebesgue spaces this result is reduced to 
$I_\gamma : L^{p}_\alpha(\Bbb R^n)\to L^{q}_{-\beta}(\Bbb R^n),$
 $1< p\le q<\infty,$ that is,
(\ref{stein-weiss}).
 Note that necessity of conditions (\ref{stein-weiss1}) was discussed in \cite{duo}.

\vskip 0.8 cm

\section{Lemmas}
Define \begin{eqnarray*}
f^{**}(t)=\frac 1t \int\limits_0^{t}f^*(s) ds.
\end{eqnarray*}
We will use the fact (see \cite[p. 53]{bs}) that  $f^{\ast\ast}$  can be written as follows:
\begin{eqnarray*}
f^{\ast\ast}(t)=\sup_{{|E|=t}}\frac1{|E|}
\int\limits_E |f(x)|dx.
  \end{eqnarray*}

\begin{lemma}\label{L1}  Let  $W$ and $E$ be measurable sets in $\Bbb R^n$, then
\begin{eqnarray}\label{2}
\qquad \left|\int_{W}\mu(x)\int_{E}\nu(y)K(y-x)dy dx\right|
\leq
\int_{0}^{|W|}\mu^*(s)\int_0^{|E|} \nu^*(t)K^{**}\left(\max(t,s) \right)dt ds.
\end{eqnarray}
\end{lemma}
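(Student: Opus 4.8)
The plan is to reduce the claim to an estimate for indicator functions and then recover the general case by a layer-cake superposition. First I would note that, since $\mu,\nu$ are nonnegative weights and the decreasing rearrangement only sees $|K|$, the triangle inequality lets me replace $K$ by $|K|\ge 0$; this does not affect the right-hand side, which involves $K^{**}=|K|^{**}$. It then suffices to set $\tilde\mu=\mu\chi_W$ and $\tilde\nu=\nu\chi_E$ and bound $I=\int\int \tilde\mu(x)\tilde\nu(y)K(y-x)\,dy\,dx$, now with $K\ge 0$.

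The core step is the indicator case: for measurable sets $A,B$ of finite measure,
\begin{equation*}
\int_A\int_B K(y-x)\,dy\,dx\le |A|\,|B|\,K^{**}(\max(|A|,|B|)).
\end{equation*}
To see this, fix $x$ and use translation invariance together with the Hardy--Littlewood inequality $\int_S K\le \int_0^{|S|}K^*$: since $|B-x|=|B|$,
\begin{equation*}
\int_B K(y-x)\,dy=\int_{B-x}K(z)\,dz\le \int_0^{|B|}K^*(v)\,dv=|B|\,K^{**}(|B|).
\end{equation*}
Integrating in $x\in A$ gives the bound with $K^{**}(|B|)$; running the same argument with the roles of $A$ and $B$ interchanged gives $K^{**}(|A|)$, and since $K^{**}$ is nonincreasing these combine into $K^{**}(\max(|A|,|B|))$. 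Using monotonicity of $K^{**}$ once more, this product bound is dominated by the integral form
\begin{equation*}
\int_A\int_B K(y-x)\,dy\,dx\le \int_0^{|A|}\int_0^{|B|}K^{**}(\max(s,t))\,ds\,dt,
\end{equation*}
which is the shape needed for superposition.

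To pass to general weights I would use the layer-cake representations $\tilde\mu=\int_0^\infty\chi_{W_\sigma}\,d\sigma$ and $\tilde\nu=\int_0^\infty\chi_{E_\rho}\,d\rho$, where $W_\sigma=\{\tilde\mu>\sigma\}\subset W$ and $E_\rho=\{\tilde\nu>\rho\}\subset E$. Expanding $I$, applying the integral-form indicator bound to each pair $(W_\sigma,E_\rho)$, and invoking Fubini (all integrands nonnegative) yields
\begin{equation*}
I\le \int_0^\infty\!\!\int_0^\infty\!\!\int_0^{|W_\sigma|}\!\!\int_0^{|E_\rho|}K^{**}(\max(s,t))\,ds\,dt\,d\rho\,d\sigma.
\end{equation*}
The key identity is the standard formula relating a rearrangement to its distribution function, $\tilde\mu^*(s)=\int_0^\infty\chi_{\{s<|W_\sigma|\}}\,d\sigma$ (and likewise for $\tilde\nu$). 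Carrying out the $\sigma$- and $\rho$-integrals first converts the right-hand side exactly into $\int_0^\infty\int_0^\infty \tilde\mu^*(s)\,\tilde\nu^*(t)\,K^{**}(\max(s,t))\,dt\,ds$. Finally, since $\tilde\mu^*$ is supported in $[0,|W|]$ with $\tilde\mu^*\le\mu^*$, and $\tilde\nu^*$ in $[0,|E|]$ with $\tilde\nu^*\le\nu^*$, I restrict the limits to $[0,|W|]\times[0,|E|]$ and bound the restricted rearrangements by $\mu^*,\nu^*$, which is precisely (\ref{2}).

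The main obstacle is the indicator estimate, specifically producing the $\max$ inside $K^{**}$: the two candidate values $K^{**}(|A|)$ and $K^{**}(|B|)$ arise from integrating in the two different orders, and it is the monotonicity of $K^{**}$ that lets me retain the smaller one. The remaining work---the layer-cake superposition and the Fubini bookkeeping that regenerates $\mu^*$ and $\nu^*$ from the level sets---is routine once the distribution-function identity is in hand.
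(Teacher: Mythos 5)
Your proof is correct, but it follows a genuinely different route from the paper's. The paper peels off the weights by two successive applications of the Hardy--Littlewood rearrangement inequality, using the characterization $f^{**}(t)=\sup_{|\eta|=t}\frac1{|\eta|}\int_{\eta}|f|$, and then disposes of the kernel by exchanging suprema and integrals: for $t\ge s$,
$$\sup_{|\eta_1|=s}\sup_{|\eta_2|=t}\frac1{|\eta_1||\eta_2|}\int_{\eta_2}\int_{\eta_1}|K(y-x)|\,dx\,dy\le K^{**}(t),$$
which again rests on translation invariance. You instead work bottom-up: you isolate the bilinear indicator estimate $\int_A\int_B K(y-x)\,dy\,dx\le |A||B|\,K^{**}(\max(|A|,|B|))$ (proved by exactly the same translation-invariance-plus-Hardy--Littlewood mechanism), pass to its integral form by monotonicity of $K^{**}$, and then reassemble the general weights through the layer-cake decomposition and the distribution-function identity $\tilde\mu^*(s)=\int_0^\infty\chi_{\{s<|W_\sigma|\}}\,d\sigma$. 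Both proofs ultimately rely on the same two ingredients; what yours buys is a clean separation between the geometric content (the indicator inequality, where the $\max$ is visibly the minimum of the two one-sided bounds) and routine measure-theoretic bookkeeping, at the cost of invoking the Cavalieri/Fubini machinery that the paper's shorter top-down argument avoids. One small point to patch: your passage from the product bound to the integral form
$$\int_A\int_B K(y-x)\,dy\,dx\le\int_0^{|A|}\int_0^{|B|}K^{**}(\max(s,t))\,ds\,dt$$
presumes $|A|,|B|<\infty$; since the level sets $W_\sigma$, $E_\rho$ may have infinite measure when $W$ or $E$ does, you should add a sentence exhausting them by finite-measure subsets and appealing to monotone convergence, after which the superposition goes through unchanged.
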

\begin{proof}
The proof is inspired by ideas from the paper \cite{jfaa}.
By the Hardy-Littlewood rearrangement inequality
\begin{eqnarray*}
\int_{W}\mu(x)\int_{E}\nu(y)K(y-x)dy dx
&\leq&
\int_{0}^{|W|} \mu^*(s)\left(\int_{E}\nu(y)K(y-\cdot)dy \right)^*(s) ds
\\
&\leq&
\int_{0}^{|W|}\mu^*(s)\left(\int_{E}\nu(y)K(y-\cdot)dy \right)^{**}(s)ds
\\
&=& \int_{0}^{|W|}\mu^*(s)\sup_{|\eta_1|=s}\frac1{|\eta_1|}\int_{\eta_1}\left|\int_{E}\nu(y)K(y-x)dy\right|dx ds
\\
&\leq&
\int_{0}^{|W|}\mu^*(s)\sup_{|\eta_1|=s}\Big(\frac1{|\eta_1|}\int_{E}|\nu(y)|\int_{\eta_1}|K(y-x)|dxdy\Big)
ds.
\end{eqnarray*}
We use similar estimates for the inner integral to get
\begin{eqnarray*}
\int_{W}\mu(x)\int_{E}\nu(y)K(y-x)dy dx
&\leq&
\int_0^{|W|}\mu^*(s)\sup_{|\eta_1|=s}\int_0^{|E|}\nu^*(t)\sup_{|\eta_2|=t}\frac1{|\eta_2|}\frac1{|\eta_1|}
\int_{\eta_2}\int_{\eta_1}\left|K(y-x)\right|dxdy dt ds\\
&\leq&
\int_0^{|W|}\mu^*(s)\int_0^{|E|}\nu^*(t)\sup_{|\eta_1|=s}\sup_{|\eta_2|=t}
\frac1{|\eta_2|}\frac1{|\eta_1|}\int_{\eta_2}\int_{\eta_1}\left|K(y-x)\right|dxdy dtds.
\end{eqnarray*}
If $t\geq s$, then

\begin{align*}
&\sup_{|\eta_1|=s}\sup_{|\eta_2|=t}\frac1{|\eta_2|}\frac1{|\eta_1|}\int_{\eta_2}\int_{\eta_1}|K(y-x)|dxdy\\
&\leq
\sup_{|\eta_1|=s}\frac1{|\eta_1|}\int_{\eta_1}\sup_{|\eta_2|=t}\frac1{|\eta_2|}\int_{\eta_2}|K(y-x)|dydx =K^{**}(t).
\end{align*}
Similarly, if  $t<s$, then
$$
\sup_{|\eta_1|=s}\sup_{|\eta_2|=t}\frac1{|\eta_2|}\frac1{|\eta_1|}\int_{\eta_2}\int_{\eta_1}|K(y-x)|dxdy
\leq K^{**}(s)
$$
and the statement follows.
\end{proof}

\begin{lemma}\label{L2}
Let  $\beta>1$. Suppose
\begin{eqnarray}\label{int-zv}
\mathcal{B}=\sup\limits_{\omega>0}\frac{1}{\omega^{\beta}}\int_{0}^{\omega}\mu^*(s)ds\int_{0}^\omega \nu^*(t)dt \int_{0}^\omega K^*(s)ds<\infty.
\end{eqnarray}
Then there exists  $C$ depending only on  $\beta$ such that
\begin{eqnarray}\label{23}
\sup_{\omega>0}\frac1{\omega^{\beta-1}}\int_{0}^{\omega}\mu^*(s)\int_0^\omega \nu^*(t)K^{**}\left(\max(t,s) \right)dt ds\leq C \mathcal{B}.
\end{eqnarray}
\end{lemma}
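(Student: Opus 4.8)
The plan is to reduce the double integral to the single scalar inequality supplied by the hypothesis, applied pointwise. Introduce the primitives $M(u)=\int_0^u\mu^*(s)\,ds$, $N(u)=\int_0^u\nu^*(t)\,dt$, and $\mathcal{K}(u)=\int_0^u K^*(\sigma)\,d\sigma$, so that $K^{**}(u)=\mathcal{K}(u)/u$ and assumption (\ref{int-zv}) becomes the scale-invariant bound
\[
M(u)\,N(u)\,\mathcal{K}(u)\le \mathcal{B}\,u^{\beta}\qquad\text{for all }u>0.
\]
I will use this with $u$ equal to the running variable of integration, together with the elementary fact that a nonincreasing function is dominated by its average, $\nu^*(t)\le N(t)/t$ and $\mu^*(s)\le M(s)/s$ (which follows at once from monotonicity).

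First I would split the square $[0,\omega]^2$ according to whether $t\ge s$ or $t<s$, which resolves $\max(t,s)$. On $\{t\ge s\}$ one has $K^{**}(\max(t,s))=K^{**}(t)$, and after interchanging the order of integration the inner $s$-integral produces $M(t)$; on $\{t<s\}$ one has $K^{**}(\max(t,s))=K^{**}(s)$ and the inner $t$-integral produces $N(s)$. Thus the double integral in (\ref{23}) equals $I_1(\omega)+I_2(\omega)$, where
\[
I_1(\omega)=\int_0^\omega \nu^*(t)\,K^{**}(t)\,M(t)\,dt,\qquad
I_2(\omega)=\int_0^\omega \mu^*(s)\,K^{**}(s)\,N(s)\,ds.
\]
These are interchanged by the substitution $(\mu^*,M)\leftrightarrow(\nu^*,N)$, under which $\mathcal{B}$ is invariant, so it suffices to estimate $I_1$.

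For $I_1$ I replace $K^{**}(t)$ by $\mathcal{K}(t)/t$ and apply the scalar bound in the form $M(t)\mathcal{K}(t)\le \mathcal{B}\,t^{\beta}/N(t)$, which cancels the factor $N(t)$ once I insert $\nu^*(t)\le N(t)/t$. What remains is
\[
I_1(\omega)\le \mathcal{B}\int_0^\omega \frac{\nu^*(t)}{N(t)}\,t^{\beta-1}\,dt
\le \mathcal{B}\int_0^\omega t^{\beta-2}\,dt=\frac{\mathcal{B}}{\beta-1}\,\omega^{\beta-1}.
\]
The hypothesis $\beta>1$ enters precisely here, ensuring both the integrability of $t^{\beta-2}$ at the origin and the positivity of the exponent $\beta-1$; this is the one place where the restriction on $\beta$ is essential rather than cosmetic, and I expect it to be the only genuine obstacle. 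Dividing by $\omega^{\beta-1}$, taking the supremum over $\omega$, and adding the identical estimate for $I_2$ yields (\ref{23}) with $C=2/(\beta-1)$, depending only on $\beta$. The remaining technical care is routine: the Fubini interchange and the monotone-average inequality are justified because we integrate nonnegative functions over the bounded interval $[0,\omega]$, and the degenerate case $N\equiv0$ (equivalently $\nu^*\equiv0$) makes the left-hand side of (\ref{23}) vanish trivially.
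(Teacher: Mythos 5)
Your proof is correct, and it takes a genuinely different route from the paper's. Both arguments start the same way: resolving $\max(s,t)$ and applying Fubini turns the double integral into the two pieces $\int_0^\omega \nu^*(t)K^{**}(t)M(t)\,dt$ and $\int_0^\omega \mu^*(s)K^{**}(s)N(s)\,ds$, which the paper writes in the combined form $\int_0^\omega K^{**}(s)\bigl(N(s)M(s)\bigr)'\,ds$. From there the paper integrates by parts: it transfers the derivative onto $K^{**}$, uses the identity $-\bigl(K^{**}(s)\bigr)'=\frac1s\bigl(K^{**}(s)-K^*(s)\bigr)\le \frac1s K^{**}(s)$ together with (\ref{int-zv}) to control the resulting integral, and then must treat a boundary term separately, including a limit argument as $s\to 0^+$ to show it vanishes. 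You bypass all of that machinery: you dominate $\nu^*(t)\le N(t)/t$ (symmetrically $\mu^*(s)\le M(s)/s$) and apply the hypothesis pointwise in the form $M(t)\mathcal{K}(t)\le \mathcal{B}\,t^{\beta}/N(t)$, which collapses each piece to $\mathcal{B}\int_0^\omega t^{\beta-2}\,dt$; the condition $\beta>1$ enters at exactly the same spot in both proofs. What your route buys: no differentiation of $K^{**}$ or of $NM$ (hence no absolute-continuity or integration-by-parts justification), no boundary-term analysis, and an explicit constant $C=2/(\beta-1)$. What you should still say explicitly is why the division by $N(t)$ (and by $M(s)$ in the symmetric piece) is legitimate: unless $\nu^*$, $\mu^*$, or $K^*$ vanishes identically --- cases in which (\ref{23}) is trivial --- monotonicity of the rearrangements forces $N(t)>0$ and $M(s)>0$ for all positive arguments, and (\ref{int-zv}) then forces all three primitives to be finite; your closing remark covers the case $N\equiv 0$ but the same observation is needed for $M$ and $\mathcal{K}$. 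This is a one-line addition, not a gap in the argument.
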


\begin{proof}
For  $\omega>0$ we have
\begin{align*}
&\frac1{\omega^{\beta-1}}\int_{0}^{\omega}
\mu^*(s)\int_{0}^{\omega} \nu^*(t)K^{**}\left(\max(t,s) \right) dtds
\\&=\frac1{\omega^{\beta-1}}\int_{0}^{\omega}
\mu^*(s)
\left(\int_{0}^s \nu^*(t)K^{**}(s)dt+\int_s^\omega \nu^*(t)K^{**}(t) dt\right)ds
\\
&=\frac1{\omega^{\beta-1}}\int_0^\omega
K^{**}(s)\left(\int_0^s \nu^*(t)dt \int_{0}^s\mu^*(t)dt\right)'ds
\\
&=
\left.\frac1{\omega^{\beta-1}}\left[K^{**}(s)\int_{0}^s \nu^*(t)dt\int_{0}^s\mu^*(t)dt\right|_{0}^\omega-\right.
\left. \int_0^\omega\left(K^{**}(s)\right)'\int_{0}^s \nu^*(t)dt\int_0^s\mu^*(t)dtds\right]\\
&=:
I_1+I_2.
\end{align*}
To estimate  $I_2$, taking into account that
$$-\left(K^{**}(s)\right)'=\frac 1s\Big(K^{**}(s)-K^*(s)\Big)\le \frac 1s K^{**}(s),$$
we get
\begin{eqnarray*}
I_2&\leq&\frac1{\omega^{\beta-1}}\int_0^\omega K^{**}(s)\left(\int_0^s \nu^*(t)dt\int_0^s\mu^*(t)dt\right)\frac{ds}{s}.
\end{eqnarray*}
By (\ref{int-zv})
\begin{eqnarray*}
I_2&\leq& \frac{\mathcal{B}}{ \omega^{\beta-1}}\int_{0}^\omega\frac{ds}{s^{2-\beta}}\lesssim \mathcal{B}.
\end{eqnarray*}
Let us estimate  $I_1$. We have
\begin{eqnarray*}
I_1&=&\frac1{\omega^\beta}\int_{0}^\omega K^*(t)dt\int_{0}^{\omega}\nu^*(t)dt\int_{0}^{\omega}\mu^*(t)dt-\frac1{\omega^{\beta-1}}\lim\limits_{s\rightarrow0+}\frac1{s}\int_{0}^s K^*(t)dt\int_{0}^s \nu^*(t)dt\int_{0}^s\mu^*(t)dt\\&\le&
\mathcal{B}
+\frac1{\omega^{\beta-1}}\left|\lim\limits_{s\rightarrow0+}\frac1{s}\int_{0}^s K^*(t)dt\int_{0}^s \nu^*(t)dt\int_{0}^s\mu^*(t)dt\right|.
\end{eqnarray*}
Since
$$
\frac1{s}\int_{0}^s K^*(t)dt\int_0^s \nu^*(t)dt\int_0^s\mu^*(t)dt\leq s^{\beta-1}\mathcal{B},\qquad s>0,
$$
then
$$
\lim\limits_{s\rightarrow0+}\frac1{s}\int_{0}^s K^*(t)dt\int_0^s \nu^*(t)dt\int_0^s \mu^*(t)dt=0,
$$
and therefore, $I_1\leq \mathcal{B}.$
\end{proof}

\begin{lemma} \label{L3} Let  $1 < p , q < \infty$.
 Let $\mu, \nu,$ and  $K$ be locally integrable functions on  $\Bbb R^n$ and satisfy, for any $\lambda\ge 1$,
\begin{eqnarray}\label{3zv}
\qquad \mu^*(\lambda t)\lesssim \frac{\mu^*(t)}{\lambda^\alpha},
\qquad
\nu^*(\lambda t)\lesssim \frac{\nu^*(t)}{\lambda^\beta},
\qquad
K^*(\lambda t)  \lesssim \frac{K^*(t)}{\lambda^\rho},\qquad t>0,
\end{eqnarray}
for some $\alpha\ge 0,$ $\beta\ge 0$, and $\rho\ge 0$.
Let also
\begin{eqnarray}\label{4zv}
\rho+\alpha+1/p\geq1,   \qquad \rho +\beta+1/q'\geq1.
\end{eqnarray}
If
\begin{eqnarray*}
\mathcal{D}:=\sup\limits_{t>0}\frac{1}{t^{2+1/p-1/q}}\int_{0}^{t}\mu^*(s)ds\int_{0}^{t} \nu^*(s)ds\int_{0}^{t} K^*(s)ds<\infty,
\end{eqnarray*}
then
\begin{eqnarray*}
 \sup_{{}^{\xi>0}_{\eta>0}}\frac{1}{\eta^{\frac1{q'}}}\frac{1}{\xi^{\frac1p}}\int_{0}^{\eta}\nu^*(t)\int_{0}^{\xi}\mu^*(s)K^{**}(\max(s,t))dsdt \le C\mathcal{D},
\end{eqnarray*}
where $C$ depends only on  $p, q, \alpha, \beta, \rho$.
\end{lemma}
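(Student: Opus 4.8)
My plan is to reduce this two–parameter estimate to the diagonal case $\xi=\eta$, which is exactly Lemma~\ref{L2}, plus an off–diagonal tail that is controlled by $\mathcal D$ together with the scaling hypotheses (\ref{3zv}). First I would record a symmetry: the left–hand quantity, call it $Q(\xi,\eta)$, together with all the hypotheses, is invariant under the involution that swaps $(\xi,\eta)$, $(\mu,\nu)$, $(\alpha,\beta)$ and sends $(p,q)\mapsto(q',p')$. Indeed the exponent $2+\frac1p-\frac1q$ and the product $\int_0^t\mu^*\int_0^t\nu^*\int_0^t K^*$ defining $\mathcal D$ are invariant, while the two conditions in (\ref{4zv}) are interchanged. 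Since this involution swaps the regions $\{\xi\le\eta\}$ and $\{\eta\le\xi\}$, it suffices to treat $\xi\le\eta$. For such $\xi,\eta$ I split the rectangle $[0,\xi]\times[0,\eta]$ into the square $[0,\xi]^2$ and the strip $[0,\xi]\times[\xi,\eta]$:
\[
\int_0^\eta\nu^*(t)\int_0^\xi\mu^*(s)K^{**}(\max(s,t))\,ds\,dt=\int_0^\xi\nu^*(t)\int_0^\xi\mu^*(s)K^{**}(\max(s,t))\,ds\,dt+\Big(\int_0^\xi\mu^*\Big)\int_\xi^\eta\nu^*(t)K^{**}(t)\,dt,
\]
where on the strip $t\ge\xi\ge s$ forces $\max(s,t)=t$, decoupling the two integrals.

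For the square term I apply Lemma~\ref{L2} (with the exponent $2+\frac1p-\frac1q$ in place of $\beta$ there, which exceeds $1$, and with $\mathcal B=\mathcal D$). This yields $\int_0^\xi\nu^*\int_0^\xi\mu^*K^{**}(\max)\lesssim \mathcal D\,\xi^{1+\frac1p-\frac1q}$, and after dividing by $\eta^{1/q'}\xi^{1/p}$ only the factor $(\xi/\eta)^{1/q'}\le1$ survives; hence this contribution is $\lesssim\mathcal D$.

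The real work is the strip term. Here I would use the definition of $\mathcal D$ to write $\int_0^\xi\mu^*\le \mathcal D\,\xi^{2+\frac1p-\frac1q}\big/\big(\int_0^\xi\nu^*\int_0^\xi K^*\big)$, then insert the elementary lower bounds $\int_0^\xi\nu^*\ge\xi\,\nu^*(\xi)$ and $\int_0^\xi K^*=\xi\,K^{**}(\xi)$. The scaling in (\ref{3zv}) passes to averages, since $K^{**}(\lambda t)\lesssim\lambda^{-\rho}K^{**}(t)$, giving for $t\ge\xi$ the bound $\nu^*(t)K^{**}(t)\lesssim \nu^*(\xi)K^{**}(\xi)(\xi/t)^{\beta+\rho}$. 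Combining these ingredients, the strip contribution is dominated by
\[
\frac{1}{\eta^{1/q'}\xi^{1/p}}\Big(\int_0^\xi\mu^*\Big)\int_\xi^\eta\nu^*(t)K^{**}(t)\,dt\;\lesssim\;\mathcal D\,\frac{\xi^{\beta+\rho-1/q}}{\eta^{1/q'}}\int_\xi^\eta t^{-(\beta+\rho)}\,dt.
\]

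It remains to evaluate $\int_\xi^\eta t^{-(\beta+\rho)}\,dt$ in the three regimes. If $\beta+\rho>1$ the integral is $\lesssim\xi^{1-\beta-\rho}$ and the right side collapses to $\mathcal D\,(\xi/\eta)^{1/q'}\le\mathcal D$. If $\beta+\rho<1$ it is $\lesssim\eta^{1-\beta-\rho}$, producing $\mathcal D\,(\xi/\eta)^{\beta+\rho-1/q}$, whose exponent is nonnegative \emph{exactly} by the second inequality of (\ref{4zv}), namely $\beta+\rho\ge1/q$, so again the bound is $\le\mathcal D$. The borderline $\beta+\rho=1$ gives $\mathcal D\,(\xi/\eta)^{1/q'}\log(\eta/\xi)$, which stays bounded because $1/q'>0$. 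Thus every case is $\lesssim\mathcal D$, finishing $\xi\le\eta$ and, by the symmetry, the general case. I expect the tail estimate to be the main obstacle: the delicate point is the interlocking of the single–scale bound $\mathcal D$ with the scaling exponents, and in particular the verification that (\ref{4zv}) forces all the resulting exponents of $\xi/\eta$ to be nonnegative; the diagonal square, by contrast, is immediate from Lemma~\ref{L2}.
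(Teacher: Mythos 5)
Your proof is correct and takes essentially the same route as the paper's: both split off the square $[0,\xi]^2$, handled by Lemma \ref{L2} with exponent $2+\frac1p-\frac1q$, and control the remaining region $t\in[\xi,\eta]$ by transferring the scaling hypotheses (\ref{3zv}) to $K^{**}$, invoking $\mathcal{D}$ at scale $\xi$, and using (\ref{4zv}) to make the resulting power of $\xi/\eta$ nonnegative. The only differences are cosmetic: the paper discretizes the tail into blocks of length $\xi$ and bounds the sum $\sum_{k}(k-1)^{-(\rho+\beta)}$ where you bound the continuous integral $\int_\xi^\eta t^{-(\rho+\beta)}\,dt$, and your duality involution $(\mu,\nu,\alpha,\beta,p,q,\xi,\eta)\mapsto(\nu,\mu,\beta,\alpha,q',p',\eta,\xi)$ is a clean formalization of the paper's closing remark that the case $\xi\ge\eta$ is ``similar.''
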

\begin{proof}

 Suppose also that $\eta>\xi>0.$ We take an integer
$N>1$ such that $(N-1)\xi<\eta\leq N\xi.$
Then
\begin{align*}
&\frac{1}{\eta^{\frac1{q'}}}\frac{1}{\xi^{\frac1p}}\int_{0}^{\eta}\nu^*(t)\int_{0}^{\xi}\mu^*(s)K^{**}(\max(s,t))dsdt
\\
&\leq\frac{1}{\left((N-1)\xi\right)^{\frac1{q'}}}\frac{1}{\xi^{\frac1p}}\int_{0}^{\eta}\nu^*(t)\int_{0}^{\xi}\mu^*(s)K^{**}(\max(s,t))ds dt
\\
&\lesssim
\frac{1}{
{((N-1)\,\xi)^{1/q'}\xi^{1/p}}} \int\limits_{0}^{\xi} \mu^*(s)
\sum_{k=1}^N \int\limits_{(k-1)\xi}^{k\xi}
\nu^*(t)K^{**}(\max(s,t)) dtds.
\end{align*}
We divide the last expression into two terms:
\begin{eqnarray*}
&& \frac{1}{ {((N-1)\,\xi)^{1/q'}\xi^{1/p}}} \int\limits_{0}^{\xi}
\mu^*(s) \int\limits_{0}^{\xi} \nu^*(t)K^{**}(\max(s,t))dtds
\\
&+&
 \frac{1}{ {((N-1)\,\xi)^{1/q'}\xi^{1/p}}} \int\limits_{0}^{\xi}
\mu^*(s) \sum_{k=2}^N \int\limits_{(k-1)\xi}^{k\xi}
\nu^*(t)K^{**}(t) dtds =:J_1+J_2.
\end{eqnarray*}
Note that (\ref{3zv}) implies
\begin{eqnarray}\label{33zv}
K^{**}(\lambda t)\lesssim \frac{K^{**}(t)}{\lambda^\rho}.
\end{eqnarray}
Indeed,
\begin{eqnarray*}
K^{**}(\lambda t)=\frac 1{\lambda t}\int_0^{\lambda t}K^*(s) ds&=&\frac 1{t}\int_0^{t}K^*(\lambda s) ds\\&\lesssim&
\frac 1{t}\int_0^{t}\frac{K^*(s)}{\lambda^\rho} ds=  \frac{K^{**}(t)}{\lambda^\rho}.
\end{eqnarray*}
Therefore, using $\nu^*(\lambda t)\lesssim {\lambda^{-\beta}}{\nu^*(t)}$ and (\ref{33zv}), we get
\begin{eqnarray}\label {11}
J_2
&\lesssim&
\frac{1}{ {((N-1)\,\xi)^{1/q'}\xi^{1/p}}} \int\limits_{0}^{\xi}
\mu^*(s) ds
\sum_{k=2}^N
\nu^*\big((k-1)\xi\big)K^{**}\big((k-1)\xi\big) \xi
\nonumber
\\
&\lesssim&
 \frac{1}{ {((N-1)\,\xi)^{1/q'}\xi^{1/p}}} \int\limits_{0}^{\xi}
\mu^*(s)\,ds
\int\limits_{0}^{\xi}{\nu^*(t)}dt K^{**}(\xi)\sum_{k=2}^N
\frac{1}{(k-1)^{\rho+\beta}}.
\end{eqnarray}
Noting that $\rho +\beta+1/q'\ge 1$, we get
$$
 J_2\lesssim \mathcal{D}.
$$
Estimating $J_1$, we use Lemma \ref{L2} to get $J_1\lesssim \mathcal{D}$.

Summing  the estimates for $J_1$ and $J_2$, we finally have
\begin{eqnarray*}
&& \frac{1}{\eta^{\frac1{q'}}}\frac{1}{\xi^{\frac1p}}\int_{0}^{\eta}\nu^*(t)\int_{0}^{\xi}\mu^*(s)K^{**}(\max(s,t))dsdt
\lesssim \mathcal{D}
\end{eqnarray*}
in the case when $\eta>\xi$.

If $\xi\ge \eta$, then we use similar estimates and the condition  $\rho+\alpha+1/p \ge 1$.

\hfill\end{proof}

\vskip 0.8 cm

\section{Proof of Theorem \ref{TTT}}

\begin{proof}
We consider the operator
\begin{eqnarray}\label{operator}
(A f)(y) = (A_{K, \mu,\nu}f)(y) =
\nu(y)\int\limits_{\Bbb R^n} {f(x)K(x-y)\mu(x)} \,dx, \qquad y\in \Bbb{R}^n.
\end{eqnarray}
Let us first prove that the operator $A$ is  a $(p, q)$ quasi-weak-type operator, i.e.,
\begin{eqnarray}\label{operator-a}
\|A\|_{L^{p,1}(\Bbb R^n)\to L^{q,\infty}(\Bbb R^n)} < C^*,
\end{eqnarray}
where $C^* = C(p,q, r) \mathcal{G}\|K\|_{L^{r,\infty}(\Bbb R^n)}$. 
By Corollary 4.1 from the paper \cite{NT}, we have
\begin{eqnarray}\label{operator-ab}
 \|A\|_{L^{p,1}(\Bbb R^n)\to L^{q,\infty}(\Bbb R^n)}\asymp
\sup_{{}^{|W|>0}_{|E|>0}}\frac1{|E|^{\frac1{q'}}}\frac1{|W|^{\frac1p}}
\Big|\int_{E}\nu(y)\int_{W}K(y-x)\mu(x)dxdy\Big|
\end{eqnarray}
and hence, it is enough to show that the latter is bounded.

Let $E$ and $W$ be any measurable sets from $\mathbb{R}^n$ of positive measure. Lemma \ref{L1} yields
\begin{align*}
&\frac1{|E|^{\frac1{q'}}}\frac1{|W|^{\frac1p}}\int_{E}\nu(y)\int_{W}K(y-x)\mu(x)dxdy\\
&\leq\qquad\frac{1}{|E|^{\frac1{q'}}}\frac{1}{|W|^{\frac1p}}\int_{0}^{|E|}\nu^*(t)\int_{0}^{|W|}\mu^*(s)K^{**}(\max(s,t))ds dt\\
&\leq
\qquad
\Big(\sup_{t>0}\frac1{t^{1-\frac1r}}\int_0^tK^*(s)ds
\Big)
\frac{1}{|E|^{\frac1{q'}}}\frac{1}{|W|^{\frac1p}}\int_{0}^{|E|}\nu^*(t)\int_{0}^{|W|}\mu^*(s)\frac1{\max(s,t)^{\frac1r}}ds dt
\\
&\lesssim \qquad \|K\|_{L^{r,\infty}} \,\frac{1}{|E|^{\frac1{q'}}}\frac{1}{|W|^{\frac1p}}\int_{0}^{|E|}\nu^*(t)\int_{0}^{|W|}\mu^*(s)\frac1{\max(s,t)^{\frac1r}}ds dt.
\end{align*}
Since the function $\varphi(x)=|x|^{-n/r}$ satisfies $\varphi^{**}(s)\asymp s^{-1/r}$, Lemma \ref{L3} gives
\begin{align*}
&\frac{1}{|E|^{\frac1{q'}}}\frac{1}{|W|^{\frac1p}}\int_{0}^{|E|}\nu^*(t)\int_{0}^{|W|}\mu^*(s)\frac1{\max(s,t)^{\frac1r}}ds dt
\\
&\lesssim \quad\;\sup\limits_{t>0}\frac{1}{t^{2+1/p-1/q}}\int_{0}^{t}\mu^*(s)ds\int_{0}^{t} \nu^*(s)ds\int_{0}^{t} \frac 1{s^{\frac1r}}ds
\,\lesssim\, \mathcal{G}
\end{align*}
 for any
sets $E$ and $W$ with positive measure,
provided
\begin{eqnarray}\label{22}
1 < p , q, r < \infty,\;\; \;\; \alpha+ 1/r+1/p>1,   \;\;\;\; \beta+1/r+1/q'>1.
\end{eqnarray}

We note that  the expression $\mathcal{G}$
 depends only on $1/p-1/q$.
 Further, since conditions  (\ref{22}) contain strict inequalities,
 one can choose two pairs $(p_0,q_0)$ and  $(p_1,q_1)$ such that
$1<p_0<p<p_1<\infty$, $1<q_0<q<q_1<\infty$, and
$$
1/p-1/q= 1/p_0-1/q_0=1/p_1-1/q_1, \;\;\;\; \alpha+ 1/r+1/p_i>1,   \;\;\;\; \beta+1/r+1/q'_i>1, \;\;\; i= 0, 1.
$$
Then, (\ref{operator-a}) holds for $(p_0,q_0)$ and $(p_1,q_1)$,
and therefore,
$$
A : L^{p_0,1} \to L^{q_0,\infty}
\quad\mbox{and} \quad
A: L^{p_1,1} \to
L^{q_1,\infty},
$$
where the norms are bounded by $\mathcal{G}\|K\|_{L^{r,\infty}}
$ up to a constant.
Then, by the interpolation theorem (see, e.g., \cite[Ch 5, \S 1]{bs}) we write
$$
A : L^{p(\theta),\tau_1} \to L^{q(\theta),\tau_1}
$$
and
$$
\|A\|_{L^{p(\theta),\tau_1}\to L^{q(\theta),\tau_1}} \leq C(\theta, p_i, q_i, r, \tau_1) \,\mathcal{G}\,
\|K\|_{L^{r,\infty}}
$$
for
$$
1/p(\theta)=(1-\theta)/p_0 + \theta/p_1,\;\;
1/q(\theta)=(1-\theta)/q_0 + \theta/q_1,\,\,\;\;0<\theta<1\;,\;\;\; 0<\tau_1\leq \infty.
$$
For some $0<\theta<1$ we have $p(\theta)=p$.  Since
$1/p_0-1/q_0=1/p_1-1/q_1=1/p(\theta)-1/q(\theta),$  in this
case  $q(\theta)$ coincides with $q.$

Finally,
$$
\|A\|_{L^{p,\tau_1}\to L^{q,\tau_2}} \lesssim
\|A\|_{L^{p,\tau_1}\to L^{q,\tau_1}} \lesssim
\,\mathcal{G} \|K\|_{L^{r,\infty}},
$$
when $\tau_1\le\tau_2$.
Taking $p=\tau_1$ and $q=\tau_2$ implies (\ref{zvezda}).

Let us show necessity of (\ref{21-1}). Suppose (\ref{zvezda}) holds. For measurable  sets $E$ and $W$ such that $|E|=|W|$, we put
$$K(x):=|E-W|^{-\frac1r}\chi_{E-W}(x).$$
Then $\|K\|_{L^{r,\infty}(\Bbb R^n)}=1$. Using   (\ref{operator-ab}), we get
\begin{eqnarray*}
1\gtrsim
 \|A\|_{L^{p}(\Bbb R^n)\to L^{q}(\Bbb R^n)}
\gtrsim
 \|A\|_{L^{p,1}(\Bbb R^n)\to L^{q,\infty}(\Bbb R^n)}
&\gtrsim& \frac1{|E|^{\frac1{q'}}}\frac1{|W|^{\frac1{p}}}\Big|\int_{E}\nu(y)\int_{W}K(y-x)\mu(x)dxdy\Big|
\\
&=&\frac1{|E|^{1+\frac1p-\frac1q}}\frac1{|E-W|^{\frac1{r}}}\int_{E}\nu(y)\int_{W}\mu(x)dxdy.
\end{eqnarray*}
Taking supremum over all $E$ and $W$ completes the proof.
\hfill\end{proof}

\begin{proof}[Proof of Corollary \ref{cor-kerman}]

First, we prove the sufficiency part. We
  take
$\mu(x)=|x|^{-\alpha}$ and $\nu(x)=|x|^{-\beta}$
and note that (\ref{25zv}) can be written as
$\alpha/n+ 1/r+1/p>1$ and   $\beta/n+ 1/r+1/q'>1.$ Further,
$\mathcal{G}<\infty$ only if $\frac{1}{q}=\frac{1}{p}+\frac{1}{r}+\frac{\alpha+\beta}{n}-1$ and
(\ref{emb2}) follows.

Second, to prove necessity of condition (\ref{emb2-cond1}) for a fixed $r\in(1,\infty)$,
we consider $K(x)=|x|^{\gamma-n}$ such that $\frac1r=1-\frac{\gamma}{n}$. Therefore,  $0<\gamma<n$ and
$K\in L^{r,\infty}(\Bbb R^n)$. Then the boundedness of the fractional integral  from $L^{p}_\alpha(\Bbb R^n)$
to
$L^{q}_{-\beta}(\Bbb R^n)$ gives (see \cite[Th. 5.1]{duo}) that
\begin{eqnarray}\label{emb2-cond}
\frac{1}{q}=\frac{1}{p}+\frac{1}{r}+\frac{\alpha+\beta}{n}-1,
\quad \alpha<n/p',\quad  \beta<n/q.
\end{eqnarray}
These conditions can be also verified using (\ref{21-1}).

We have to show that
$\alpha, \beta\ge 0$.
Let an integer $N$ be sufficiently large.
 Putting
 $K(x):=\chi_{B_2({\overline{N}})}(x)$, we have
$K\in L^{r, \infty}(\Bbb R^n)$, $r>1$.
If
$$
\|f*K\|_{L^{q}_{-\beta}(\Bbb R^n)}\lesssim\|f\|_{L^{p}_\alpha(\Bbb R^n)} \|K\|_{L^{r,\infty}(\Bbb R^n)},$$ 
then the operator
 $$
  A_{K, \alpha, \beta} f(y)=\frac{1}{|y|^{\beta}}\int\limits_{\mathbb R^n}K(x-y)f(x)\,
\frac{dx}{|x|^{\alpha}}
$$
is bounded from $L^{p}(\mathbb R^n)$ to $ L^q(\mathbb R^n)$.
Taking into account (\ref{operator-ab}),

\begin{eqnarray*}
1\gtrsim
 \|A_{K, \alpha, \beta} \|_{L^{p}(\mathbb R^n)\rightarrow L^q(\mathbb R^n)}
& \gtrsim&
  \|
A_{K, \alpha, \beta} \|_{L^{p, 1}(\mathbb R^n)\rightarrow L^{q, \infty}(\mathbb R^n)}
\\
&\asymp& \sup\limits_{\begin{array}{l}
 |E|>0\\
 |W|>0   ,
\end{array}}\frac{1}{|E|^\frac1{q'}|W|^\frac1{p}}\Big|\int\limits_E
\frac{1}{|y|^{\beta}}\int\limits_{W}
K(x-y)\, \frac{dx}{|x|^{\alpha}}
dy\Big|.
\end{eqnarray*}
Then, since $\beta < n/q<n$
\begin{eqnarray*}
1&\gtrsim&
\frac{1}{|B_1({\overline{0}})|^{\frac1{q'}}|B_2({\overline{N}})|^{\frac1p}}
\int_{B_1({\overline{0}})}\frac{1}{|y|^{\beta}}\int\limits_{B_2(\overline{N})}
K(x-y)\, \frac{dx}{|x|^{\alpha}}
dy
\\
&\geq&
C_{p,q}
\int_{B_1({\overline{0}})} |y|^{-\beta}dy
\int_{B_1({\overline{N}})}|x|^{-\alpha}dx
\gtrsim
N^{-\alpha}. 
\end{eqnarray*}
This gives $\alpha\ge 0$. Similarly, we get $\beta\ge 0$.

Finally, since $$L^{\theta}_s(\Bbb R^n)\subset L^{r,\infty}(\Bbb R^n)\quad \mbox{for} \quad\frac{1}{r}=\frac{s}{n}+\frac{1}{\theta}, \quad 0\le s<n/\theta',$$
(\ref{emb2}) gives (\ref{emb1}) provided (\ref{parameters1}), (\ref{parameters2}), and $\alpha,\beta,s\ge 0$.
\end{proof}

\vskip 0.8 cm

\section{Boundedness of the weighted convolution operators in Lorentz spaces}
In this section we deal with Problem 2 and study the weighted convolution operator
$A_K=A_{K, \mu,\nu}$  given by
 (\ref{operator}).
 In particular,
 if $K(x)=|x|^{\gamma-n},$ then $A_K=A_\gamma$, where $A_\gamma$ is defined by (\ref{poten}).
The next result gives sufficient conditions for the operator $A_K$ to be bounded from
$L^{p,\tau_1}(\Bbb R^n)$ to $L^{q,\tau_2}(\Bbb R^n)$ in terms of $\mu, \nu,$ and $K$.
 This implies  sufficient conditions for the boundedness of the operator
$A_\gamma$  given in Theorem \ref{T} (A).

\begin{theorem} \label{TT} Let  $1 < p , q < \infty$, \;\; $0 < \tau_1\leq\tau_2 \le \infty$.
 Suppose either,
 \\
 {\textnormal{(A)}} for any $\lambda\ge 1$,
\begin{eqnarray}\label{1zvd}
\qquad\mu^*(\lambda t)\lesssim \frac{\mu^*(t)}{\lambda^\alpha},
\qquad
\nu^*(\lambda t)\lesssim \frac{\nu^*(t)}{\lambda^\beta},
\qquad
K^*(\lambda t)\lesssim  \frac{K^*(t)}{\lambda^\rho},\qquad t>0,
\end{eqnarray}
where  $\alpha\ge 0,$ $\beta\ge 0, \rho\ge 0$, and
$$
\rho+\alpha+1/p>1,   \qquad \rho +\beta+1/q'>1,
$$
or, \\
 {\textnormal{(B)}} for any $\lambda\ge 1$,
\begin{eqnarray}\label{4zvd}\qquad
\mu^*(t) \lesssim {\mu^*(\lambda t)}{\lambda^{\bar\alpha}},
\qquad
\nu^*(t)\lesssim {\nu^*(\lambda t)}{\lambda^{\bar\beta}},
\qquad
K^*(t) \lesssim {K^*(\lambda t)}{\lambda^{\bar\rho}},\qquad t>0,
\end{eqnarray}
where  $\bar\alpha\geq0$, $\bar\beta\geq0$, $\bar\rho \geq 0$, and
$$
\bar\rho+\bar\alpha+1/p<1,   \qquad \bar\rho +\bar\beta+1/q'<1.
$$
If
\begin{eqnarray*}
\mathcal{D}
:=
\sup\limits_{t>0}\frac{1}{t^{2+1/p-1/q}}\int_{0}^{t}\mu^*(s)ds\int_{0}^{t} \nu^*(s)ds\int_{0}^{t} K^*(s)ds<\infty,
\end{eqnarray*}
then the operator $A_K$ is bounded from $L^{p,\tau_1}(\Bbb R^n)$ to $L^{q,\tau_2}(\Bbb R^n)$
and
$$
\|A_K\|_{L^{p,\tau_1}(\Bbb R^n) \to L^{q,\tau_2}(\Bbb R^n)} \le C \mathcal{D},
$$
where $C$ depends only on  $p, q, \tau_1, \tau_2$ and the corresponding parameters from
(\ref{1zvd}) or
(\ref{4zvd}).
\end{theorem}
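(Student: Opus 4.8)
The plan is to follow the scheme of the proof of Theorem \ref{TTT}: first establish a quasi-weak-type $(p,q)$ bound and then interpolate. Since $A_K$ is exactly the operator in (\ref{operator}), Corollary 4.1 of \cite{NT} gives, as in (\ref{operator-ab}),
\[
\|A_K\|_{L^{p,1}(\Bbb R^n)\to L^{q,\infty}(\Bbb R^n)}\asymp\sup_{{}^{|E|>0}_{|W|>0}}\frac{1}{|E|^{1/q'}|W|^{1/p}}\Big|\int_{E}\nu(y)\int_{W}K(y-x)\mu(x)\,dx\,dy\Big|,
\]
so it suffices to bound the right-hand side by $C\mathcal{D}$. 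Applying Lemma \ref{L1} to the inner double integral reduces everything to the single key estimate
\[
Q:=\sup_{\xi,\eta>0}\frac{1}{\eta^{1/q'}\xi^{1/p}}\int_{0}^{\eta}\nu^*(t)\int_{0}^{\xi}\mu^*(s)K^{**}(\max(s,t))\,ds\,dt\le C\mathcal{D}
\]
(take $\eta=|E|$, $\xi=|W|$). Under hypothesis (A) this is precisely Lemma \ref{L3}, because (\ref{1zvd}) coincides with (\ref{3zv}) and the strict inequalities in (A) imply (\ref{4zv}); hence case (A) is immediate, and only case (B) requires new work.

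The main obstacle is case (B), where the decay hypotheses of Lemma \ref{L3} are replaced by the reverse (anti-decay) conditions (\ref{4zvd}). By symmetry in $(\xi,\eta)$ I would treat only $\eta>\xi$. Splitting the $t$-integral at $\xi$ as in Lemma \ref{L3}, the part $t\le\xi$ is handled exactly as the term $J_1$ there, via Lemma \ref{L2}, which uses no regularity. The genuinely new term is the tail $\int_{\xi}^{\eta}\nu^*(t)K^{**}(t)\,dt$. The first point is that (\ref{4zvd}) passes to $K^{**}$ (the computation is the same as for (\ref{33zv})), giving $K^{**}(t)\lesssim K^{**}(\eta)(\eta/t)^{\bar\rho}$ and likewise $\nu^*(t)\lesssim\nu^*(\eta)(\eta/t)^{\bar\beta}$ for $t\le\eta$; since $\bar\rho+\bar\beta<1/q<1$ this yields $\int_{\xi}^{\eta}\nu^*K^{**}\lesssim\nu^*(\eta)K^{**}(\eta)\,\eta$, i.e.\ the tail concentrates at the large scale $\eta$. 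Thus the tail contribution to $Q$ is $\lesssim\dfrac{\eta^{1/q}}{\xi^{1/p}}\,\nu^*(\eta)K^{**}(\eta)\int_{0}^{\xi}\mu^*(s)\,ds$.

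The remaining difficulty is the mismatch between the normalizing factor $\xi^{1/p}$ and the scale $\eta$ at which the tail lives: one must replace $\xi^{-1/p}\int_{0}^{\xi}\mu^*$ by $\eta^{-1/p}\int_{0}^{\eta}\mu^*$. This is exactly where the reverse condition on $\mu$ is used. Indeed (\ref{4zvd}) gives $\mu^{**}(\eta)\gtrsim\mu^{**}(\xi)(\xi/\eta)^{\bar\alpha}$, and since $\bar\alpha<1/p'$ (a consequence of $\bar\rho+\bar\alpha+1/p<1$) this forces $t\mapsto t^{-1/p}\int_{0}^{t}\mu^*$ to be essentially nondecreasing, so that $\xi^{-1/p}\int_{0}^{\xi}\mu^*\lesssim\eta^{-1/p}\int_{0}^{\eta}\mu^*$. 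Feeding this in, bounding $\nu^*(\eta)K^{**}(\eta)\le\eta^{-2}\int_{0}^{\eta}\nu^*\int_{0}^{\eta}K^*$ by monotonicity, and invoking the definition of $\mathcal{D}$ at scale $\eta$ yields the tail contribution $\lesssim\mathcal{D}$, completing $Q\le C\mathcal{D}$. I expect this reconciliation of scales to be the one genuinely nonroutine step.

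Finally, the passage from the quasi-weak-type bound $\|A_K\|_{L^{p,1}\to L^{q,\infty}}\lesssim\mathcal{D}$ to $L^{p,\tau_1}\to L^{q,\tau_2}$ is identical to Theorem \ref{TTT}. The quantity $\mathcal{D}$ depends only on $1/p-1/q$, and every inequality in (A) (resp.\ (B)) is open; hence one may choose pairs $(p_0,q_0)$ and $(p_1,q_1)$ with $1/p_i-1/q_i=1/p-1/q$ straddling $(p,q)$ and still satisfying the hypotheses. The endpoint bounds $A_K:L^{p_i,1}\to L^{q_i,\infty}$ with norm $\lesssim\mathcal{D}$ then interpolate (\cite[Ch 5, \S 1]{bs}) to $A_K:L^{p,\tau_1}\to L^{q,\tau_1}$, and the embedding $L^{q,\tau_1}\subset L^{q,\tau_2}$ for $\tau_1\le\tau_2$ gives the stated conclusion.
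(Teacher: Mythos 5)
Your proposal is correct, and in its overall framework it matches the paper: reduction to the $(p,1)\to(q,\infty)$ bound via \cite[Cor.\ 4.1]{NT} and Lemma \ref{L1}, case (A) disposed of by Lemma \ref{L3}, and the final perturbation-plus-interpolation step (which the paper compresses into a reference back to the proof of Theorem \ref{TTT}, and which you correctly spell out, noting that $\mathcal{D}$ depends only on $1/p-1/q$ and that the hypotheses are open). However, in the one place where Theorem \ref{TT} genuinely requires new work --- case (B) --- your argument differs from the paper's. The paper handles (B) by a single dilation: with $\lambda=|E|/|W|>1$ it substitutes $s\mapsto s/\lambda$, uses the reverse regularity of $\mu^*$ and of $K^{**}$ (the transfer of (\ref{4zvd}) to $K^{**}$ is proved there exactly as in your sketch) to bound the mixed-scale integral by $\lambda^{\bar\alpha+\bar\rho+1/p-1}$ times the same integral over the square $[0,|E|]\times[0,|E|]$, observes this factor is $\le 1$ since $\bar\alpha+\bar\rho+1/p<1$, and finishes with Lemma \ref{L2} at the single scale $|E|$. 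You instead split the $t$-integral at $\xi$, apply Lemma \ref{L2} to the diagonal box, show the off-diagonal tail $\int_\xi^\eta \nu^* K^{**}$ concentrates at scale $\eta$ (using $\bar\beta+\bar\rho<1/q$), and then repair the scale mismatch $\xi^{-1/p}\int_0^\xi\mu^* \lesssim \eta^{-1/p}\int_0^\eta\mu^*$ (using $\bar\alpha<1/p'$); all of these steps check out, as does the symmetry reduction to $\eta>\xi$, since the hypotheses of (B) are symmetric under exchanging $(\mu,\bar\alpha,1/p)$ with $(\nu,\bar\beta,1/q')$. The trade-off: the paper's dilation is shorter and, in each of the two cases $|W|<|E|$ and $|W|>|E|$, uses only two of the three reverse-regularity conditions (the one on $K$ and the one on whichever weight lives on the smaller set), whereas your splitting uses all three per case but stays closer in spirit to the proof of Lemma \ref{L3} and makes explicit where each strict inequality in (B) enters.
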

\begin{proof}
As it was shown in the proof of Theorem \ref{TTT}, it is sufficient to obtain the following estimate:
$$
 \|A_K\|_{L^{p,1}(\Bbb R^n)\to L^{q,\infty}(\Bbb R^n)}\lesssim \mathcal{D}.
$$
(A) Under conditions  (\ref{1zvd}),  following the proof of Theorem \ref{TTT}, we have
$$
 \|A_K\|_{L^{p,1}(\Bbb R^n)\to L^{q,\infty}(\Bbb R^n)}
    \lesssim
   \sup_{|E|>0,|W|>0}\frac{1}{|E|^{\frac1{q'}}}\frac{1}{|W|^{\frac1p}}\int_{0}^{|E|}\nu^*(t)\int_{0}^{|W|}\mu^*(s)K^{**}(\max(s,t))ds dt
  \lesssim \mathcal{D},
$$
where the last estimate directly follows from Lemma 3.
\\
(B) Suppose conditions  (\ref{4zvd}) hold.
Let us verify that
\begin{eqnarray}\label{ert}   \sup_{|E|>0,|W|>0}\frac{1}{|E|^{\frac1{q'}}}\frac{1}{|W|^{\frac1p}}\int_{0}^{|E|}\nu^*(t)\int_{0}^{|W|}\mu^*(s)K^{**}(\max(s,t))ds dt
  \lesssim \mathcal{D}.
\end{eqnarray}
 Let  $0<|W|<|E|$ and ${|E|}/{|W|}=:\lambda>1$. Note that
 $$
 K^{**}\left(\max(\frac s\lambda,t)\right)\lesssim \lambda^{\bar \rho}K^{**}\left(\max(s,t)\right) \quad \mbox{for any}\quad s,t>0.
 $$
Indeed,  using monotonicity of $K^{**}$ and condition (\ref{4zvd}), we get
\begin{eqnarray*}
 K^{**}\left(\max\big( s/\lambda,t\big)\right)&=&\frac{1}{\max(s/\lambda,t)}\int_0^{\max(s/\lambda,t)}K^*(u)du
\\
&\leq&
\frac{\lambda}{\max( s,t)}\int_0^{{\max(s,t)}/{\lambda}}K^*(u)du
 \\& =&\frac{1}{\max(s,t)}\int_0^{\max(s,t)}K^*\big(\frac u\lambda\big)du
 \leq C\lambda^{\bar \rho}K^{**}\left(\max(s,t)\right).
\end{eqnarray*}
In view of  $\bar\alpha+\bar\rho<\frac1 {p'}$, we have
\begin{align*}
 &\frac{1}{|E|^{\frac1{q'}}}\frac{1}{|W|^{\frac1p}}\int_{0}^{|E|}\nu^*(t)\int_{0}^{|W|}\mu^*(s)K^{**}\left(\max(s,t)\right)ds dt
 \\
  &= \frac{\lambda^{\frac1 p-1}}{|E|^{\frac1{q'}+\frac1p}}\int_{0}^{|E|}\nu^*(t)\int_{0}^{|E|}\mu^*(\frac s\lambda)K^{**}\left(\max(\frac s\lambda,t)\right)ds dt
\\
  &\lesssim \frac{\lambda^{\bar\alpha+\bar\rho+\frac1 p-1}}{|E|^{\frac1{q'}+\frac1p}}\int_{0}^{|E|}\nu^*(t)\int_{0}^{|E|}\mu^*(s)K^{**}\left(\max(s,t)\right)ds dt
 \\
  & \leq\frac 1{|E|^{\frac1{q'}+\frac1p}}\int_{0}^{|E|}\nu^*(t)\int_{0}^{|E|}\mu^*(s)K^{**}\left(\max(s,t)\right)ds dt.
\end{align*}
Further, Lemma 2 implies
$$
\frac{1}{|E|^{\frac1{q'}}}\frac{1}{|W|^{\frac1p}}\int_{0}^{|E|}\nu^*(t)\int_{0}^{|W|}\mu^*(s)K^{**}\left(\max(s,t)\right)ds dt
 \lesssim \mathcal{D}.
$$
The case  ${|W|}>{|E|}$ is similar.
Thus,  we have shown (\ref{ert}), which concludes the proof.
\hfill
\end{proof}

\vskip 0.8 cm

\section{Proof of Theorem \ref{T} and its corollaries}
\begin{proof}[Proof of Theorem \ref{T}]
We start with (B).  If $A_\gamma$ is bounded from $L^{p, \tau_1}(\Bbb R^n)$ to $L^{q, \tau_2}(\Bbb R^n)$,
then it is $(p,q)$ weak-type operator, i.e.,
    $A_\gamma: L^{p,\,1}(\Bbb R^n)\rightarrow L^{q,\infty}(\Bbb R^n)$,
  and therefore, by (\ref{operator-ab}), we have
\begin{align*}
 \|A_\gamma\|_{L^{p,\tau_1}(\Bbb R^n)\to L^{q,\tau_2}(\Bbb R^n)}
&\gtrsim
\,
\|A_\gamma\|_{L^{p,\min (1,\tau_1)}(\Bbb R^n)\to L^{q,\infty}(\Bbb R^n)}
 \asymp\|A_\gamma\|_{L^{p, 1}(\Bbb R^n)\to L^{q,\infty}(\Bbb R^n)}
\\&\asymp \sup_{{}_{}^{|E|>0, |W|>0}} \frac{1}{ {|E|^{1/q'}}}
\frac{1}{ {|W|^{1/p}}} \int\limits_E \nu(y) \int\limits_W
\mu(x) |x-y|^{\gamma-n} dxdy \\
&\geq
 \sup_{B\in M} \frac1{|B|^{\frac{1}{ p}+\frac{1}{
q'}}}
\int\limits_B \nu(y)
 \int\limits_B\mu(x) |x-y|^{\gamma-n} dxdy\\
 &\gtrsim
 \sup\limits_{B\in M} \frac1{|B|^{2+\frac{1}{ p}-\frac{1}{
q}-\frac \gamma n }} \int\limits_B \nu(x)dx\int\limits_B \mu(x)
dx =\mathcal{F},
\end{align*}
that is, (\ref{1.3}) is true.

\vskip 0.3cm
The part (A) follows from Theorem \ref{TT} (A) for
$K(x)=|x|^{\gamma-n}$.
Taking 
 $\rho=1-\gamma/n$ implies $
  \|A_\gamma\|_{L^{p,\tau_1}(\Bbb R^n)\to L^{q,\tau_2}(\Bbb R^n)}
\lesssim
 \mathcal{D}\asymp \mathcal{L}$.
\end{proof}

\begin{proof}[Proof of Corollary \ref{C2}]
To show that $\mathcal{L}\lesssim \mathcal{F}$ it is enough to verify that, for a fixed positive $a$,
\begin{eqnarray}\label{df}
\sup_{|E|=a^n}\int_E\nu_0(|x|)dx \lesssim \int_{B_{a_1}(\overline{0})}\nu_0(|x|) \,dx
\end{eqnarray}
and
\begin{eqnarray}\label{dff}
\sup_{|W|=a^n}\int_W\mu_0(|x|)dx \lesssim \int_{B_{a_1}(\overline{0})}\mu_0(|x|) \,dx,
\end{eqnarray}
where $a_1/a=C(n)$.

Let us show the first inequality. We have
\begin{eqnarray*}
\sup_{|E|=a^n}\int_E\nu_0(|x|)dx  = \int_0^{a^n} \big( \nu_0(|\cdot|)\big)^*(t) \, dt  &\lesssim& \int_0^{a_1^n}\nu_0^*(t^{1/n})dt
\lesssim \int_0^{a_1}\nu_0^*(t) t^{n-1}dt.
\end{eqnarray*}
Using condition (\ref{AA}), we get
\begin{eqnarray*}
\sup_{|E|=a^n}\int_E\nu_0(|x|)dx  &\lesssim&  \int_0^{a_1}  \left(\frac 1t\int_{t/2}^t \nu_0(s) ds\right) t^{n-1}dt
\\
&\lesssim&  \int_0^{a_1}   \nu_0(t)  t^{n-1}dt \lesssim
\int_{B_{a_1}(\overline{0})}\nu_0(|x|) \,dx,
\end{eqnarray*}
i.e.,  (\ref{df}) follows.
Using (\ref{df}) and (\ref{dff}) we get
\begin{eqnarray*}
\mathcal{L}&=&\sup_{a>0}\sup_{|E|=|W|=a}\frac1{a^{2+\frac{1}{ p}-\frac{1}{q}-\frac \gamma n }}\int_W\mu_0(|x|)dx\int_E\nu_0(|x|)dx
\\
&\lesssim&
\sup_{r>0}\frac1{|
B_{r}(\overline{0})
|^{2+\frac{1}{ p}-\frac{1}{q}-\frac \gamma n }}\int_{B_{r}(\overline{0})}\mu_0(|x|)dx\int_{B_{r}(\overline{0})}\nu_0(|x|)dx\le \mathcal{F}.
\end{eqnarray*}
Finally, in view of $\mathcal{F}\le \mathcal{L}\lesssim \mathcal{F}$, the result follows from Theorem \ref{T}.
\end{proof}


\begin{proof}[Proof of Corollary \ref{C111}] (A)
Suppose  (\ref{26zv0}) holds.
Then
$\|A_{\gamma,\alpha, \beta} \|_{L^{p,\tau_1}(\Bbb R^n)\to L^{q,\tau_2}(\Bbb R^n)}<\infty$
follows from Corollary \ref{C2}.
Indeed, for
$\mu(x)=|x|^{-\alpha}$ and $\nu(x)=|x|^{-\beta}$, we have $A_{\gamma,\alpha, \beta}=
A_{\gamma,\mu(\cdot),\nu(\cdot)}$.
 Since conditions (\ref{26zv0})
 are equivalent to
 $$\sup\limits_{a>0} \frac1{a^{n(2+\frac{1}{ p}-\frac{1}{
q}-\frac \gamma n )}} \int\limits_{0}^{a} \nu_0(r)r^{n-1}dr\int\limits_{0}^{a}\mu_0(r)r^{n-1}
dr
< \infty$$
and
$$
\frac{\alpha}{ n}+\frac{1}{ p}> \frac{\gamma}{ n},\qquad \frac{\beta}{ n}+\frac{1}{ q'}> \frac{\gamma}{ n},\qquad\qquad \mbox{(cf.\, (\ref{26zv}))},
$$
(\ref{26zv0-0}) implies that  $A_{\gamma,\alpha, \beta} : L^{p,\tau_1}(\Bbb R^n)\to L^{q,\tau_2}(\Bbb R^n)$.


Let now the operator $A_{\gamma,\alpha, \beta}$ be bounded from $L^{p,\tau_1}(\Bbb R^n)$ to $L^{q,\tau_2}(\Bbb R^n)$. Then
(\ref{26zv0-0}) gives that $\gamma=\alpha+\beta+n\left(\frac1p-\frac1q\right)$.

First, we assume that
 $1<\tau_1\leq\tau_2\leq\infty$ and $\tau_1\ne\infty$.
Let us show that
$\alpha <n/p'$ and $\beta<n/q$.
Using  \cite[Th. 4.1]{NT} and taking into account that  $\left(L^{p,\tau_1}(\mathbb R^n)\right)'=L^{p',\tau'_1}(\mathbb R^n)$, we get
  \begin{eqnarray*}
1 &\gtrsim& \|A_{\gamma, \alpha, \beta} \|_{L^{p,\tau_1}(\mathbb R^n)\rightarrow L^{q,\tau_2}(\mathbb R^n)}
\\ &\gtrsim& \|
A_{\gamma, \alpha, \beta} \|_{L^{p,\tau_1}(\mathbb R^n)\rightarrow L^{q, \infty}(\mathbb R^n)}
\asymp \sup\limits_{ |E|>0}\frac{1}{|E|^\frac1{q'}}\left\|\int\limits_{E}
\frac1{|y|^\beta|x-y|^{n-\gamma}|x|^\alpha}\,
dy\right\|_{L^{p',\tau'_1}(\mathbb R^n)}.
\end{eqnarray*}
We further choose $E$ to be $B_1(\overline{0})$ or $B_1(\overline{3})$ and we get \begin{eqnarray*}
1 &\gtrsim&
\max\left\{
\frac{1}
{|B_1(\overline{0})|^\frac1{q'}}\Bigg\|
\chi_{\{x: |x|>2\}}(x)
\int\limits_{B_1(\overline{0})}
\frac{1}{|y|^\beta|x-y|^{n-\gamma}|x|^\alpha}\,
dy\Bigg\|_{L^{p',\tau'_1}(\mathbb R^n)}
\right.,
 \\&&\qquad\qquad\qquad
\left.\frac{1}{|B_1(3)|^\frac1{q'}}\Bigg\|\chi_{\{x: |x|<1\}}(x)
\int\limits_{B_1(\overline{3})}
\frac1{|y|^\beta|x-y|^{n-\gamma}|x|^\alpha}\,
dy\Bigg\|_{L^{p',\tau'_1}(\mathbb R^n)}
\right\}
\\&\gtrsim&
\max\Bigg\{\left\|
\chi_{\{x: |x|>2\}}(x)
|x|^{\gamma-n-\alpha}
\right\|_{L^{p',\tau'_1}(\Bbb R^n)}, \left\|
\chi_{\{x: |x|<1\}}(x)
|x|^{-\alpha}\right\|_{L^{p',\tau'_1}(\mathbb R^n)}
\Bigg\}
\\
&\gtrsim&
\max\left\{\left(\int_1^\infty t^{(1/p' -1+\gamma/n-\alpha/n)\tau'_1}\frac{dt}t\right)^{\frac1{\tau'_1}}, \left(\int_0^1 t^{(1/p'-\alpha/n)\tau'_1}\frac{dt}t\right)^{\frac1{\tau'_1}}\right\}.
 \end{eqnarray*}
Taking into account that $\tau'_1<\infty$ and $\gamma=\alpha+\beta+n\left(\frac1p-\frac1q\right)$, we have that convergence of integrals implies
 $\beta<n/q$ and $\alpha <n/p'$.

The case $\tau_1=\tau_2=\infty$  follows from the proof above and
$$
\|A_{\gamma, \alpha, \beta} \|_{L^{p,\infty}(\mathbb R^n)\rightarrow L^{q,\infty}(\mathbb R^n)}\geq\|A_{\gamma, \alpha, \beta} \|_{L^{p,2}(\mathbb R^n)\rightarrow L^{q,\infty}(\mathbb R^n)}.
$$

Let now $0<\tau_1\leq\tau_2<\infty$. Since the function $f(x)=(\tau_1/p)^{1/\tau_1}|W|^{-1/p}\chi_W(x)$ satisfies $\|f\|_{L^{p,\tau_1}(\Bbb R^n)}=1$,
 we get
\begin{eqnarray*}
1 &\gtrsim& \|A_{\gamma, \alpha, \beta} \|_{L^{p,\tau_1}(\mathbb R^n)\rightarrow L^{q,\tau_2}(\mathbb R^n)}
\\ &=&\sup_{\|f\|_{L^{p,\tau_1}(\Bbb R^n)}=1} \|
A_{\gamma, \alpha, \beta}f \|_{ L^{q, \tau_2}(\mathbb R^n)}
\geq  \sup\limits_{ |W|>0}\frac{1}{|W|^\frac1{p}}\left\|\int\limits_{W}
\frac1{|y|^\beta|x-y|^{n-\gamma}|x|^\alpha}\,
dx\right\|_{L^{q,\tau_2}(\mathbb R^n)}.
 \end{eqnarray*}
Repeating the arguments gives $\beta<n/q$ and  $\alpha <n/p'$.

(B). Let $0<\tau\le 1$ and (\ref{26zv1}) be true.
 Since
\begin{eqnarray*}
\|A_{\gamma, \alpha, \beta}\|_{L^{p, \tau}(\Bbb R^n)\to L^{q,\infty}(\Bbb R^n)}
&\asymp&
\|A_{\gamma, \alpha, \beta}\|_{L^{p, 1}(\Bbb R^n)\to L^{q,\infty}(\Bbb R^n)}
\\&\asymp& \sup_{{}_{}^{|E|>0, |W|>0}} \frac{1}{ {|E|^{1/q'}}}
\frac{1}{ {|W|^{1/p}}}
\int\limits_E
\int\limits_W
\frac1{|y|^\beta|x-y|^{n-\gamma}|x|^\alpha}\,
dx\, dy
,
 \end{eqnarray*}
 using Lemmas \ref{L1} and \ref{L3}, we get
$
\|A_{\gamma, \alpha, \beta}\|_{L^{p, 1}(\Bbb R^n)\to L^{q,\infty}(\Bbb R^n)}
\lesssim \mathcal{D}$, where $\mathcal{D}$ is defined in Lemma \ref{L3} and $\mu(x)=|x|^{-\alpha}$, $\nu(x)=|x|^{-\beta}$, and $K(x)=|x|^{\gamma-n}$.
Note that conditions (\ref{3zv})-(\ref{4zv}) hold since $\alpha /n\leq 1/p'$ and  $\beta/n\leq 1/q.$
Finally, $\|A_{\gamma, \alpha, \beta}\|_{L^{p, 1}(\Bbb R^n)\to L^{q,\infty}(\Bbb R^n)}\lesssim\mathcal{D}<\infty$ because of $\gamma=\alpha+\beta+n\left(\frac1p-\frac1q\right)$.

To prove the inverse part, we have
\begin{eqnarray*}
1 &\gtrsim& \|A_{\gamma, \alpha, \beta} \|_{L^{p,1}(\mathbb R^n)\rightarrow L^{q,\infty}(\mathbb R^n)}
\\
&\gtrsim& \max\Bigg\{\sup_{0<\delta\leq1}\frac{1}{|B_\delta(\overline{0})|^\frac1{q'}|B_1(\overline{3})|^{\frac1p}}\int\limits_{B_1(\overline{3})}
\int\limits_{B_\delta(\overline{0})}
\frac{dxdy}{|y|^\beta|x-y|^{n-\gamma}|x|^\alpha},
\\
&&\qquad\quad
\sup_{0<\delta\leq1}\frac{1}{|B_1(\overline{3})|^\frac1{q'}|B_\delta(\overline{0})|^{\frac1p}}\int\limits_{B_\delta(\overline{0})}
\int\limits_{B_1(\overline{3})}
\frac{dxdy}{|y|^\beta|x-y|^{n-\gamma}|x|^\alpha}\,
 \Bigg\}
\\
&\gtrsim& \max\Bigg\{\sup_{0<\delta\leq1}\frac{1}{|B_\delta(\overline{0})|^\frac1{q'}}\int\limits_{B_\delta(\overline{0})}
\frac1{|y|^\beta}\,
dy, \;\;\sup_{0<\delta\leq1}\frac{1}{|B_\delta(\overline{0})|^{\frac1p}}\int\limits_{B_\delta(\overline{0})}
\frac1{|x|^\alpha}\,
dx \Bigg\}.
\end{eqnarray*}
This gives $\alpha /n\leq 1/p'$ and $\beta/n\leq 1/q.$
Necessity of the condition  $\gamma=\alpha+\beta+n\left(\frac1p-\frac1q\right)$ follows from
$$
1 \gtrsim \|A_{\gamma, \alpha, \beta} \|_{L^{p,1}(\mathbb R^n)\rightarrow L^{q,\infty}(\mathbb R^n)}
\gtrsim \sup_{0<\delta<\infty}\frac{1}{|B_\delta(\overline{0})|^\frac1{q'}|B_\delta(\overline{0})|^{\frac1p}}\int\limits_{B_\delta(\overline{0})}\int\limits_{B_\delta(\overline{0})}
\frac{dxdy}{|y|^\beta|x-y|^{n-\gamma}|x|^\alpha}
$$
and the homogeneity argument.
\end{proof}

\vskip 0.8 cm

\section{Final remarks}\label{final}

\vskip 0.2 cm

{\bf 1.} Theorem A provides sufficient conditions for weighted Young's inequality
$L^{p}_\alpha(\Bbb R^n)*L^{\theta}_{s}(\Bbb R^n)\subset L^{q}_{-\beta}(\Bbb R^n)$ to hold.
Certain necessary conditions, which differ from sufficient ones, were obtained by Bui \cite[Th. 2.1]{bui}.
He posed the problem (\cite[p.32]{bui}) of finding necessary and sufficient conditions for weighted Young's inequality.
We give a solution of this problem.

\begin{thma'}
Suppose  $1 < p,q < \infty$ and $1 < \theta \le \infty$.
Then
 \begin{eqnarray}\label{emb11}
 L^{p}_\alpha(\Bbb R^n)*L^{\theta}_{s}(\Bbb R^n)\subset L^{q}_{-\beta}(\Bbb R^n),\qquad 
  \frac{1}{q}\le \frac{1}{p}+\frac{1}{\theta},
\end{eqnarray}
if and only if
\begin{eqnarray}\label{parameters11}
 \frac{1}{q}=\frac{1}{p}+\frac{1}{\theta}+\frac{\alpha+\beta+s}{n}-1,
\end{eqnarray}
\begin{eqnarray}\label{parameters21}
\qquad
\alpha<n/p',\quad  \beta<n/q,\quad s<n/\theta'\qquad\qquad \mbox{if}\quad \theta<\infty
\end{eqnarray}
$$
\alpha<n/p',\quad  \beta<n/q,\quad 0<s<n\qquad \,\quad\mbox{if}\quad \theta=\infty
 \leqno{(\ref{parameters21})'}
$$
and
\begin{eqnarray}\label{parameters31}
\alpha+  \beta\ge 0,\quad \alpha+  s\ge 0, \quad  \beta+s\ge 0.
\end{eqnarray}
\end{thma'}

\begin{remark1}{{
Necessity of condition (\ref{parameters31}) was recently proved in \cite[Rem. 4.4]{mv} and \cite[Rem. 2.1]{denapoli}.
Our proof uses different argument.}
Note also that in case of $\theta=\infty$ condition (\ref{parameters31}) is equivalent to the following condition:
$$
\alpha+  \beta\ge 0.
 \leqno{(\ref{parameters31})'}
$$
This follows from  (\ref{parameters11}) and (\ref{parameters21})$'$.
}
\end{remark1}

\begin{proof}
Sufficiency for the case $\theta<\infty$ was proved by Kerman (Theorem A). For the case $\theta=\infty$ see \cite[Th. 3.4]{Kerman}, \cite[Th. 2.2 (ii)]{bui}, and \cite[Sect. 3]{be}.

Let us prove the necessity part.
First, by scaling argument, we have
necessity of condition (\ref{parameters11}) (see \cite[Th. 2.1]{bui}).

Second, let $1<\theta\le \infty$ and (\ref{emb11}) hold true.
 Take $\widetilde{K}(x)=K(x)|x|^s\in L^\theta(\Bbb R^n)$, i.e., $K\in L^\theta_s(\Bbb R^n)$. Then  the operator
 $$
  A_{\tilde{K}, \alpha, \beta,s} f(y)=\frac{1}{|y|^{\beta}}\int\limits_{\mathbb R^n}\frac{\widetilde{K}(x-y)f(x)}{|x-y|^s|x|^\alpha}\,
dx
$$
is bounded from $L^p(\Bbb R^n)$ to $L^q(\Bbb R^n)$. Using \cite[Th. 4.1]{NT}, we get
\begin{eqnarray*}
 \|A_{\tilde{K}, \alpha, \beta,s} \|_{L^{p}(\mathbb R^n)\rightarrow L^q(\mathbb R^n)}&\geq& \|
A_{\tilde{K}, \alpha, \beta,s} \|_{L^{p}(\mathbb R^n)\rightarrow L^{q, \infty}(\mathbb R^n)}
\nonumber
\\
&\asymp& \sup\limits_{ |E|>0}\frac{1}{|E|^\frac1{q'}}
\Big\|\int\limits_{E}
\frac{\widetilde{K}(x-y)}{|y|^\beta|x-y|^s|x|^\alpha}\,
dy\Big\|_{(L^{p}(\mathbb R^n))'}
\\
&\gtrsim&
 \sup\limits_{ |E|>0}\frac{1}{|E|^\frac1{q'}}\left(\;\int\limits_{\Bbb R^n}
\Big|\int\limits_{E}
\frac{\widetilde{K}(x-y)}{|y|^\beta|x-y|^s|x|^\alpha}\,
dy\Big|^{p'}
dx\right)^{1/p'}.
\end{eqnarray*}
Let an integer $N$ be sufficiently large. Put
 $$\widetilde{K}(x):=\chi_{B_2({\overline{N}})}(x)\in L^{\theta}(\Bbb R^n).$$
Then
\begin{eqnarray*}
1&\gtrsim&
 \|A_{\tilde{K}, \alpha, \beta,s} \|_{L^{p}(\mathbb R^n)\rightarrow L^q(\mathbb R^n)}
\gtrsim
\frac{1}{|B_{1}(-{\overline{N}})|^\frac1{q'}}\left(\int\limits_{B_1(\overline{0})}
\Big|\int\limits_{B_1(-{\overline{N}})}
\frac{\tilde{K}(x-y)}{|y|^\beta|x-y|^s|x|^\alpha}\,
dy\Big|^{p'}
dx\right)^{1/p'}
\\
&\gtrsim&
\left(\int\limits_{B_1(\overline{0})}|x|^{-\alpha p'}dx\right)^{1/p'}\int\limits_{B_1({\overline{N}})}\frac{dy}{|y|^{s+\beta}}.
\end{eqnarray*}
In the last expression, the first integral converges only if
$\alpha< n/p'$ while 
 the second integral is equivalent to $N^{-(\beta+s)}$ which is bounded by a constant only if $\beta+s\ge 0$.


Taking into account that
$$
\|A_{\tilde{K}, \alpha, \beta,s} \|_{L^{p }(\mathbb R^n)\rightarrow L^{q}(\mathbb R^n)}=\|A_{\tilde{K}, \beta, \alpha,s} \|_{L^{q'}(\mathbb R^n)\rightarrow L^{p'}(\mathbb R^n)},
 $$
 we also get that $\alpha+s\ge 0$ and $\beta<n/q$.

To prove necessity of
conditions $\alpha+\beta\geq 0$ and $s< n/\theta'$, we consider the operator
$$
 \left( A_{\tilde{f}, \alpha, \beta,s} K\right)(y)=\frac{1}{|y|^{\beta}}\int\limits_{\mathbb R^n}\frac{\tilde{f}(x-y)K(x)}{|x-y|^\alpha|x|^s}\,
dx
$$
from $L^\theta(\Bbb R^n)$  to $L^q(\Bbb R^n)$  with the kernel
$$
\frac{\tilde{f}(x-y)}{|y|^\beta|x-y|^\alpha|x|^s},
$$
where $\tilde{f}(x)=f(x)|x|^\alpha\in L^p(\Bbb R^n)$, and proceed similarly.

Finally, we have to show that if $\theta =\infty$, then $s>0$.
  Let $L^{p}_\alpha(\Bbb R^n)*L^{\infty}_{s}(\Bbb R^n)\subset L^{q}_{-\beta}(\Bbb R^n),$ $1<p\le q<\infty$.
We define
$$
K(x):=|x|^{-s}\in L^{\infty}_s(\Bbb R^n),  \;\;\;\;\;   f(x):=\chi_{\Bbb R^n\diagdown B_2(\overline{0})}(x)|x|^{-\alpha-n/p}(\ln |x|)^{-2/p}\in L^p_\alpha(\Bbb R^n).
$$
Taking into account that  $\alpha<n/p'$, we notice that $s$ has to be positive
 in order that the integral
$$
(K*f)(y)=\int\limits_{\Bbb R^n\diagdown B_2(\overline{0})}\frac{1 }{|y-x|^{s}|x|^{\alpha+n/p}(\ln |x|)^{2/p}}dx
$$
converges.
\end{proof}

\vskip 0.8 cm

{\bf 2.} The Plancherel-Polya-Nikol'skii inequality plays an important role in the theory of function spaces (see, e.g., \cite{mv, tribel}) and approximation theory (see, e.g., \cite{devore, ditzian}).
The following result  provides sufficient conditions for a two-weight version of this inequality, partially answering the question posed in \cite[Rem. 4.2]{mv}. Let $\mathcal{F}$ and  $\mathcal{F}^{-1}$ denote the Fourier transform and its inverse.
\begin{corollary}\label{nikolll}
Let $p,q,r, \mu$,  and $\nu$ satisfy conditions of Theorem \ref{TTT}. Let
$Q_d$ is a rectangular parallelepiped in $\Bbb R^n$
with edges of lengths $d=(d_1,...,d_n)$. If \, $supp \,({\mathcal{F}{f}}) \subseteq Q_d$, then
$$
\|f\|_{L^q(\nu;\Bbb R^n)}\lesssim \mathcal{G}\,\left(\prod_{j=1}^nd_j \right)^{1/{r'}}\|f\|_{L^p(\mu^{-1};\Bbb R^n)},
$$
where $\mathcal{G}$ is given by (\ref{21}).
\end{corollary}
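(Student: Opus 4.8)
The plan is to exploit the spectral localization of $f$ via a Nikol'skii-type reproducing kernel and then to invoke Theorem \ref{TTT} directly. First I would fix a single profile $\psi\in C_c^\infty(\Bbb R)$ with $\psi\equiv 1$ on $[0,1]$ and $\mathrm{supp}\,\psi\subset[-\tfrac12,\tfrac32]$, and for a box $Q_d=\prod_{j=1}^n[a_j,a_j+d_j]$ set
\[
\widehat{\Phi}(\xi)=\prod_{j=1}^n\psi\!\big((\xi_j-a_j)/d_j\big),
\]
so that $\widehat{\Phi}\equiv 1$ on $Q_d$ while $\widehat{\Phi}$ is smooth and compactly supported. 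Since $\mathrm{supp}\,(\mathcal{F}f)\subseteq Q_d$, one has $\widehat{f}\,\widehat{\Phi}=\widehat{f}$, which gives the reproducing identity $f=f*\Phi$.

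Second, I would identify $\Phi$ and compute its weak-type norm. A direct inverse Fourier transform gives, with $A=\mathrm{diag}(d_1,\dots,d_n)$,
\[
|\Phi(x)|=\Big(\prod_{j=1}^n d_j\Big)\,|V(Ax)|,\qquad V(y)=\prod_{j=1}^n\check\psi(y_j),
\]
the translation $a=(a_1,\dots,a_n)$ of the box entering only through unimodular modulation factors $e^{2\pi i a_j x_j}$ that do not affect $|\Phi|$; in particular no separate centering step is needed. Here $V$ is a fixed Schwartz function, hence $V\in L^1\cap L^\infty\subset L^{r,\infty}(\Bbb R^n)$ with $\|V\|_{L^{r,\infty}}$ depending only on $r$ and $n$. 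Using that the distribution function of $V(A\cdot)$ is $(\prod_j d_j)^{-1}$ times that of $V$, so that $(V(A\cdot))^*(t)=V^*\!\big((\prod_j d_j)\,t\big)$, I obtain
\[
\|\Phi\|_{L^{r,\infty}(\Bbb R^n)}=\Big(\prod_{j=1}^n d_j\Big)^{1-\frac1r}\|V\|_{L^{r,\infty}(\Bbb R^n)}\lesssim \Big(\prod_{j=1}^n d_j\Big)^{1/r'}.
\]

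Finally, since $p,q,r,\mu,\nu$ satisfy the hypotheses of Theorem \ref{TTT}, the embedding $L^p(\mu^{-1})*L^{r,\infty}\subset L^q(\nu)$ holds with norm controlled by $\mathcal{G}$; applying it to the reproducing identity yields
\[
\|f\|_{L^q(\nu)}=\|f*\Phi\|_{L^q(\nu)}\lesssim \mathcal{G}\,\|f\|_{L^p(\mu^{-1})}\,\|\Phi\|_{L^{r,\infty}}\lesssim \mathcal{G}\,\Big(\prod_{j=1}^n d_j\Big)^{1/r'}\|f\|_{L^p(\mu^{-1})},
\]
which is the desired inequality.

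The only genuinely technical step, and the one I would treat with care, is the anisotropic rescaling of the weak-type $(r,\infty)$ quasinorm: one must verify that dilation by the diagonal matrix $A$ together with the normalizing factor $\prod_j d_j$ produces \emph{exactly} the exponent $1/r'$ and a constant independent of $d$. The remaining ingredients—the construction of the bump $\Phi$, the reproducing identity $f=f*\Phi$, and the application of Theorem \ref{TTT}—are routine, and the implied constant absorbs only $\|V\|_{L^{r,\infty}}$ and the interpolation constants, hence depends solely on $p,q,r,n$.
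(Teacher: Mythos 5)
Your proof is correct and follows essentially the same route as the paper: there, Corollary \ref{nikolll} is deduced from Corollary \ref{cor4}, whose proof likewise writes $f=f*\Phi$ with a kernel whose Fourier transform is $1$ on $Q_d$ — the sharp cutoff $\Phi=\mathcal{F}^{-1}\chi_{Q_d}$, i.e. $\prod_{j=1}^n\frac{\sin(d_jx_j/2)}{x_j/2}$ — bounds $\|\Phi\|_{L^{r,\infty}(\Bbb R^n)}\lesssim\big(\prod_{j=1}^n d_j\big)^{1/r'}$ by exactly your anisotropic scaling argument, and then invokes Theorem \ref{TTT}. Your only deviation is using a smooth bump instead of the sharp cutoff, which makes the $L^{r,\infty}$ membership of the rescaled profile trivial (it is Schwartz) but does not change the mechanism of the proof.
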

\begin{remark1}
For the power weights necessary and sufficient conditions for the Plancherel-Polya-Nikol'skii inequality were proved in \cite{mv}, see
Proposition 4.1 and Remark 4.2.
Plancherel-Polya-Nikol'skii's inequality with $A_\infty$ weights were studies in \cite{bui1, mast}.
The weighted version of Nikol'skii's inequality  for trigonometric polynomials was also studied in
\cite[Cor. 5.2]{NT}.
\end{remark1}

The proof of Corollary \ref{nikolll} immediately follows from the following result. 


\begin{corollary}\label{cor4}
Let $Q_d$ be a rectangular parallelepiped in $\Bbb R^n$ with edges of lengths $d=(d_1,...,d_n)$ and
$$
S_{Q_d}(f):=\mathcal{F}^{-1}\chi_{Q_d} \mathcal{F}f.
$$
{\textnormal{(A)}}. Let $p,q,r$ and $\mu, \nu$ satisfy conditions of Theorem \ref{TTT}.
If
$\mathcal{G} < \infty,$  where $\mathcal{G}$ is defined by (\ref{21}),
then
\begin{eqnarray}\label{C4-1}
\|S_{Q_d}\|_{L^p(\mu^{-1};\Bbb R^n)\to L^q(\nu;\Bbb R^n)}\leq C \mathcal{G}\left(\prod_{j=1}^nd_j \right)^{1/{r'}},
\end{eqnarray}
where $C$
is independent of  $d$.
\\{\textnormal{(B)}}. Let $1<p,q,r<\infty$.
If
$$
\|S_{Q_d}\|_{L^p(\mu^{-1};\Bbb R^n)\to L^q(\nu;\Bbb R^n)}<\infty,
$$
then
\begin{eqnarray}\label{C4-3}
\mathcal{K} :=\sup_{{\frac{\pi}{4d}\leq t\leq\frac\pi{2d}}}\sup\limits_{Q_t}\frac{\nu(Q_t) \mu(Q_t)}{|Q_t|^{1+\frac1r+\frac1p - \frac1q }}< \infty
\end{eqnarray}
and
\begin{eqnarray}\label{C4-2}
\|S_{Q_d}\|_{L^p(\mu^{-1};\Bbb R^n)\to L^q(\nu;\Bbb R^n)}\geq C \mathcal{K}\left(\prod_{j=1}^nd_j \right)^{1/{r'}},
\end{eqnarray}
where $C$ is independent of  $d$.
\end{corollary}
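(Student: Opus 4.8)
The plan is to realize $S_{Q_d}$ as a convolution operator and then read off both parts from Theorem \ref{TTT} and its proof. By the convolution theorem, $S_{Q_d}f = c_n\,K_d * f$ with $K_d := \mathcal{F}^{-1}\chi_{Q_d}$, and since $Q_d$ is a box with edges of length $d=(d_1,\dots,d_n)$ (which I center at the origin, the unimodular modulation arising from an off-center box being harmless for the modulus estimates below), the kernel factorizes into one-dimensional Dirichlet kernels,
\[
K_d(x) = c_n\prod_{j=1}^n \frac{\sin(d_j x_j/2)}{x_j},\qquad\text{so that}\qquad |K_d(x)| \lesssim \prod_{j=1}^n \min\!\Big(d_j,\tfrac{1}{|x_j|}\Big).
\]
The key analytic input is the sharp membership $K_d\in L^{r,\infty}$ with the correct scaling. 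Writing $G(x)=\prod_j \min(d_j,|x_j|^{-1})$ and substituting $y_j=d_jx_j$ one finds $G^*(t)=(\prod_j d_j)\,H^*\big((\prod_j d_j)\,t\big)$, where $H(y)=\prod_j\min(1,|y_j|^{-1})$, and hence $\|G\|_{L^{r,\infty}}=(\prod_j d_j)^{1/r'}\|H\|_{L^{r,\infty}}$. The finiteness of $\|H\|_{L^{r,\infty}}$ for every $r\in(1,\infty)$ follows from the distributional estimate $|\{H>\lambda\}|\asymp \lambda^{-1}(\log\tfrac1\lambda)^{n-1}\lesssim \lambda^{-r}$ as $\lambda\to0^+$. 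This yields $\|K_d\|_{L^{r,\infty}}\lesssim (\prod_j d_j)^{1/r'}$.

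With this estimate, part (A) is immediate: since $S_{Q_d}f=c_n K_d*f$, Theorem \ref{TTT} and the definition \eqref{21} of $\mathcal{G}$ give
\[
\|S_{Q_d}f\|_{L^q(\nu)}\lesssim \mathcal{G}\,\|K_d\|_{L^{r,\infty}}\,\|f\|_{L^p(\mu^{-1})}\lesssim \mathcal{G}\,\Big(\prod_{j=1}^n d_j\Big)^{1/r'}\|f\|_{L^p(\mu^{-1})},
\]
which is exactly \eqref{C4-1}, with a constant independent of $d$.

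For part (B) I would first identify $S_{Q_d}$ with the weighted operator \eqref{operator}. The substitution $f=h\mu$ gives $\|S_{Q_d}\|_{L^p(\mu^{-1})\to L^q(\nu)}=\|A_{K_d,\mu,\nu}\|_{L^p\to L^q}$. Using the elementary embeddings $L^{p,1}\hookrightarrow L^p$ and $L^q\hookrightarrow L^{q,\infty}$, the operator norm dominates the weak-type norm, which by \eqref{operator-ab} (from \cite{NT}) is comparable to $\sup_{E,W}|E|^{-1/q'}|W|^{-1/p}\big|\int_E\nu(y)\int_W K_d(y-x)\mu(x)\,dx\,dy\big|$. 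Now fix a vector $t$ with $\pi/(4d_j)\le t_j\le \pi/(2d_j)$ and a box $Q_t$, and test with $E=W=Q_t$. For $y,x\in Q_t$ the difference $u=y-x$ obeys $|u_j|\le 2t_j\le \pi/d_j$, so $d_ju_j/2\in[-\tfrac\pi2,\tfrac\pi2]$ and therefore $K_d(u)\gtrsim\prod_j d_j$ with a definite sign; this positivity is precisely what forces the range $\pi/(4d)\le t\le \pi/(2d)$. Consequently the bilinear form is $\gtrsim \prod_j d_j\,\nu(Q_t)\mu(Q_t)$, and since $|Q_t|^{1/q'+1/p}=|Q_t|^{1+1/p-1/q}$,
\[
\|S_{Q_d}\|_{L^p(\mu^{-1})\to L^q(\nu)}\gtrsim \prod_{j=1}^n d_j\,\frac{\nu(Q_t)\mu(Q_t)}{|Q_t|^{1+\frac1p-\frac1q}}=\Big(\prod_{j=1}^n d_j\Big)|Q_t|^{1/r}\,\frac{\nu(Q_t)\mu(Q_t)}{|Q_t|^{1+\frac1r+\frac1p-\frac1q}}.
\]
Because $t_j\asymp 1/d_j$ we have $|Q_t|\asymp(\prod_j d_j)^{-1}$, whence $(\prod_j d_j)|Q_t|^{1/r}\asymp(\prod_j d_j)^{1/r'}$; taking the supremum over admissible $t$ and over all positions of $Q_t$ produces simultaneously the finiteness of $\mathcal{K}$ in \eqref{C4-3} and the lower bound \eqref{C4-2}.

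I expect the main obstacle to lie in the two kernel estimates. On the upper side, the delicate point is the weak-$L^r$ membership of $H$: its distribution function carries a logarithmic factor $(\log\tfrac1\lambda)^{n-1}$, so the bound $|\{H>\lambda\}|\lesssim\lambda^{-r}$ is genuinely available only for $r>1$ (and would fail at $r=1$), which is consistent with the hypothesis $1<r<\infty$. On the lower side, the work is in pinning down the region on which $K_d$ is simultaneously positive and of full size $\asymp\prod_j d_j$; this is where the explicit constants $\pi/4$ and $\pi/2$ enter, and it is essential that $E$ and $W$ be taken as the \emph{same} box $Q_t$ so that $y-x$ stays in the central lobe $\{|u_j|\le\pi/d_j\}$ and no cancellation occurs in the double integral.
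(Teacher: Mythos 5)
Your proposal is correct and follows essentially the same route as the paper: part (A) by writing $S_{Q_d}f=(\mathcal{F}^{-1}\chi_{Q_d})*f$ and invoking Theorem \ref{TTT} with the bound $\|\mathcal{F}^{-1}\chi_{Q_d}\|_{L^{r,\infty}}\lesssim(\prod_j d_j)^{1/r'}$, and part (B) by passing to the weighted operator, using the bilinear characterization (\ref{operator-ab}) from \cite{NT}, and testing with $E=W=Q_t$ for $\pi/(4d)\le t\le\pi/(2d)$ where the product of Dirichlet kernels is positive and of size $\asymp\prod_j d_j$. The only addition is your explicit scaling/distribution-function verification of the weak-$L^r$ kernel estimate, a step the paper asserts without proof.
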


\begin{proof}
Taking into account that
$
S_{Q_d}(f)=
\left(\mathcal{F}^{-1}\chi_{Q_d}\right)*f,
$
and
$$
\mathcal{F}^{-1}\chi_{Q_d}(x)=\prod_{j=1}^n\frac{\sin (d_jx_j/2)}{x_j/2},
$$
we get from Theorem \ref{TTT} that
\begin{eqnarray*}
\|S_{Q_d}(f)\|_{L^q(\nu;\Bbb R^n)} &\lesssim& \mathcal{G}\left\|\prod_{j=1}^n\frac{\sin (d_jx_j/2)}{x_j/2}\right\|_{L^{r, \infty}(\mathbb R^n)}\|f\|_{L^p(\mu^{-1};\Bbb R^n)}\\&\lesssim& \mathcal{G}\left(\prod_{j=1}^nd_j \right)^{\frac1{r'}}\|f\|_{L^p(\mu^{-1};\Bbb R^n)},
\end{eqnarray*}
i.e., (\ref{C4-1}) follows.

Let now
$
\|S_{Q_d}\|_{L^p(\mu^{-1};\Bbb R^n)\to L^q(\nu;\Bbb R^n)}<\infty.
$
Then, defining  the operator
$$S_{Q_d, \mu,\nu}(f,y)=
\nu(y)\int\limits_{\Bbb R^n}\prod_{j=1}^n\frac{\sin (d_j(y_j-x_j)/2)}{(y_j-x_j)/2}\mu(x)f(x)dx,
$$
we get
\begin{align*}
&
\|S_{Q_d}\|_{L^p(\mu^{-1};\Bbb R^n)\to L^q(\nu;\Bbb R^n)}=\|S_{Q_d, \mu,\nu}\|_{L^p(\Bbb R^n)\to L^q(\Bbb R^n)}\gtrsim \|S_{Q_d, \mu,\nu}\|_{L^{p,1}(\Bbb R^n)\to L^{q,\infty}(\Bbb R^n)}
\\
&\asymp \sup_{{}_{}^{|E|>0, |W|>0}} \frac{1}{ {|E|^{1/q'}}}
\frac{1}{ {|W|^{1/p}}} \left|
\int\limits_E \nu(y) \int\limits_W
\mu(x) \prod_{j=1}^n\frac{\sin (d_j(y_j-x_j)/2)}{(y_j-x_j)/2} dxdy\right|.
\end{align*}
Since  for $x,y\in Q_t$ we have that $y-x\in Q_{2t}(\overline{0})$, where $Q_{2t}(\overline{0})$ is the rectangular parallelepiped with edges of lengths $2t=(2t_1,...,2t_n)$
centered   at $\overline{0}=(0, \ldots, 0)$, then
\begin{align*}
\|S_{Q_d}\|_{L^p(\mu^{-1};\Bbb R^n)\to L^q(\nu;\Bbb R^n)}&\gtrsim
\sup_{\frac{\pi}{4d}\leq t\leq\frac\pi{2d}}\sup\limits_{Q_t}\frac1{|Q_t|^{\frac1p + \frac1{q'} }}
 \int\limits_{Q_t} \nu(y) \int\limits_{Q_t}
\mu(x) \prod_{j=1}^n\frac{\sin (d_j(y_j-x_j)/2)}{(y_j-x_j)/2} dxdy
\\
&\gtrsim
 \;
\prod_{j=1}^n{d_j}\sup_{\frac{\pi}{4d}\leq t\leq\frac\pi{2d}}\sup\limits_{Q_t}\frac1{|Q_t|^{\frac1p + \frac1{q'} }}
 \int\limits_{Q_t} \nu(y) \int\limits_{Q_t}
\mu(x)  dxdy
\\
&\gtrsim \;
\left(\prod_{j=1}^n d_j\right)^{1/r'}\sup_{\frac{\pi}{4d}\leq t\leq\frac\pi{2d}}\sup\limits_{Q_t}\frac{\nu(Q_t) \mu(Q_t)}{|Q_t|^{1+\frac1r+\frac1p - \frac1q }}.
\end{align*}
\end{proof}

For power weights $\mu(x)=|x|^{-\alpha}$ and $\nu(x)=|x|^{-\beta}$, we obtain necessary and sufficient conditions for boundedness of the operator $S_{Q_d}$.
\begin{corollary}\label{cor6}
Let  $1 < p , q,s < \infty$ and $\alpha\geq 0, \beta\geq 0$.
Then
\begin{eqnarray}\label{C6-1}
\|S_{Q_d}(f)\|_{L^q_{-\beta}(\Bbb R^n) }\leq C \,
\left(\prod_{j=1}^nd_j \right)^{1/{s}}
\|f\|_{L^p_\alpha(\Bbb R^n)} \qquad \forall\quad d_1,\ldots,d_n>0,
\end{eqnarray}
where $Q_d$ is a rectangular parallelepiped in $\Bbb R^n$ with edges of lengths $d=(d_1,\ldots,d_n)$
and $C$ is independent of  $d_1,\ldots,d_n$,
if and only if
\begin{eqnarray}\label{C6-2}
\frac1s=\frac{1}{p}-\frac{1}{q}+\frac{\alpha+\beta}{n},
 \quad \alpha<n/p',\quad   \beta<n/q.
\end{eqnarray}
\end{corollary}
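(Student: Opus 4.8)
The plan is to deduce this power-weight statement from the general Corollary \ref{cor4} by choosing the auxiliary exponent $r$ so that $1/r'=1/s$, i.e.\ $r=s'$; then $1<r<\infty$ and the factor $(\prod_j d_j)^{1/r'}$ in (\ref{C4-1})--(\ref{C4-2}) becomes exactly $(\prod_j d_j)^{1/s}$. The first routine point is to record that, after setting $r=s'$, the hypotheses (\ref{C6-2}) are equivalent to conditions (\ref{emb2-cond1}) of Corollary \ref{cor-kerman}: the identity $1/q=1/p+1/r+(\alpha+\beta)/n-1$ is just $1/s=1/p-1/q+(\alpha+\beta)/n$ rewritten, and a short computation using this identity shows that $\alpha/n+1/r+1/p>1$ and $\beta/n+1/r+1/q'>1$ (the power-weight form of (\ref{25zv})) are equivalent to $\beta<n/q$ and $\alpha<n/p'$, respectively.

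For sufficiency, assume (\ref{C6-2}). For $\mu(x)=|x|^{-\alpha}$ and $\nu(x)=|x|^{-\beta}$ one has $\mu^*(t)\asymp t^{-\alpha/n}$ and $\nu^*(t)\asymp t^{-\beta/n}$, so (\ref{1zv})--(\ref{2zv}) hold with exponents $\alpha/n,\beta/n$ and, by the previous paragraph, (\ref{25zv}) holds; thus the weights satisfy all hypotheses of Theorem \ref{TTT}. Moreover $\mathcal G$ in (\ref{21}) is finite: optimizing $\nu(E)$ and $\mu(W)$ over sets of measure $v$ (the extremizers being balls centered at the origin) gives $\mathcal G\asymp\sup_{v>0}v^{\,1-(\alpha+\beta)/n-1/r-1/p+1/q}$, and the exponent vanishes precisely under (\ref{C6-2}). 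An application of Corollary \ref{cor4}(A) then yields (\ref{C6-1}).

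For necessity, suppose (\ref{C6-1}) holds for all $d$ with $C$ independent of $d$. The exponent identity follows from homogeneity: writing $D_\lambda f=f(\lambda\,\cdot)$ one has the intertwining relation $S_{Q_{\lambda d}}D_\lambda=D_\lambda S_{Q_d}$, together with $\|D_\lambda f\|_{L^p_\alpha}=\lambda^{-\alpha-n/p}\|f\|_{L^p_\alpha}$ and $\|D_\lambda g\|_{L^q_{-\beta}}=\lambda^{\beta-n/q}\|g\|_{L^q_{-\beta}}$. Applying (\ref{C6-1}) to $D_\lambda f$ and the box $\lambda d$, for a fixed band-limited $f\ne0$ (e.g.\ a Schwartz function with $\widehat f$ supported in $Q_d$, so that $S_{Q_d}f=f$ and, by the assumed bound, $f\in L^q_{-\beta}$), yields $\lambda^{\beta-n/q}\|f\|_{L^q_{-\beta}}\lesssim\lambda^{\,n/s-\alpha-n/p}(\prod_j d_j)^{1/s}\|f\|_{L^p_\alpha}$; letting $\lambda\to0$ and $\lambda\to\infty$ forces the powers of $\lambda$ to balance, which is exactly $1/s=1/p-1/q+(\alpha+\beta)/n$.

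It remains to prove the strict conditions $\alpha<n/p'$ and $\beta<n/q$, and this is the main obstacle. The weak-type bound of Corollary \ref{cor4}(B) only yields $\mathcal K<\infty$, which for power weights merely gives $\alpha,\beta<n$ and is therefore too weak; moreover the kernel $K_d(z)=\prod_j \frac{\sin(d_j z_j/2)}{z_j/2}$ changes sign, so one cannot test positivity globally. I fix $d$ and localize to the main lobe: for $\delta$ small enough (depending on $d$) one has $K_d(z)\asymp|Q_d|>0$ whenever $|z|\le 2\delta$. Using the characterization of the weak-type norm from \cite[Th.\ 4.1]{NT},
\[
\infty>\|S_{Q_d,\mu,\nu}\|_{L^p\to L^{q,\infty}}\asymp\sup_{|E|>0}\frac{1}{|E|^{1/q'}}\Big(\int_{\Bbb R^n}\Big|\int_E\frac{K_d(y-x)}{|y|^\beta}\,dy\Big|^{p'}\frac{dx}{|x|^{\alpha p'}}\Big)^{1/p'},
\]
I take $E=B_\delta(\overline 0)$, restrict the outer integral to $x\in B_\delta(\overline 0)$, and bound the inner integral below by $|Q_d|\int_{\delta/2\le|y|\le\delta}|y|^{-\beta}\,dy\asymp|Q_d|\,\delta^{n-\beta}$; finiteness of the resulting expression forces $\int_{B_\delta(\overline 0)}|x|^{-\alpha p'}\,dx<\infty$, i.e.\ $\alpha<n/p'$ (strict, since the borderline integral at $\alpha=n/p'$ diverges). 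Finally, since $K_d$ is even, $S_{Q_d}$ is self-dual up to interchanging the weights, $\|S_{Q_d,\mu,\nu}\|_{L^p\to L^q}=\|S_{Q_d,\nu,\mu}\|_{L^{q'}\to L^{p'}}$, so applying the same argument to the adjoint with $(p,\alpha)$ replaced by $(q',\beta)$ gives $\beta<n/q$. Together with the exponent identity this establishes (\ref{C6-2}).
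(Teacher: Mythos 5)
Your proof is correct, and its sufficiency half coincides with the paper's: take $r=s'$, observe that the power weights satisfy (\ref{1zv}), (\ref{2zv}), (\ref{25zv}) and that $\mathcal{G}\lesssim 1$ under (\ref{C6-2}), then apply Corollary \ref{cor4}(A). The necessity half, however, follows a genuinely different route. For the exponent identity the paper stays inside its own machinery: it applies (\ref{C4-2}) with $r=s'$ and $d_1=\cdots=d_n=z$, obtaining $z^{n(1/p-1/q-1/s)+\alpha+\beta}\lesssim\mathcal{K}\lesssim 1$ for all $z>0$; your intertwining/dilation argument ($S_{\lambda Q_d}D_\lambda=D_\lambda S_{Q_d}$ tested on a fixed band-limited Schwartz function) is a self-contained homogeneity argument that bypasses Corollary \ref{cor4}(B) and the quantity $\mathcal{K}$ entirely. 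For the strict inequalities the paper tests the strong norm with the input localized on the shifted cube $C_{\pi/(4d_0)}(\overline{\pi/d})$ — the first lobe of the sine kernel, away from the singularity of $|x|^{-\alpha}$ — and the output near the origin, which yields $\beta<n/q$ from the divergence of $\int_{C_\delta(\overline{0})}|y|^{-\beta q}\,dy$, and it only asserts that $\alpha<n/p'$ follows similarly. You instead localize both variables in the central lobe, where $K_d\asymp|Q_d|$, and compensate for the fact that both weight singularities now sit at the origin by invoking the $L^{p'}$-norm characterization of the weak-type norm from \cite[Th. 4.1]{NT} (the same tool the paper uses in Corollary \ref{C111}), getting $\alpha<n/p'$; your duality step $\|S_{Q_d,\mu,\nu}\|_{L^p\to L^q}=\|S_{Q_d,\nu,\mu}\|_{L^{q'}\to L^{p'}}$ then makes the paper's ``similarly'' completely explicit. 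Both routes are valid; yours is more self-contained and symmetric, the paper's reuses its established corollaries. One caveat you share with the paper: the sufficiency rests on Theorem \ref{TTT}/Corollary \ref{cor4}(A), which are stated only for $p\le q$, while (\ref{C6-2}) does not by itself force $p\le q$; neither your argument nor the paper's addresses that case.
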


\begin{proof}
Let first (\ref{C6-2}) hold.
It is easy to see that
all conditions of Theorem \ref{TTT} are fulfilled  and $\mathcal{G} \lesssim 1$. 
Therefore, Corollary  \ref{cor4} (A) gives  (\ref{C6-1}).

On the other hand,  (\ref{C6-1}) implies
$$
\sup_{d_1,\ldots, d_n>0}
\left(\prod_{j=1}^nd_j \right)^{-1/{s}}
\|S_{Q_d}\|_{L^p_{\alpha}(\Bbb R^n)\to L^q_{-\beta}(\Bbb R^n)}\lesssim 1.
$$
Then using (\ref{C4-2}) with $r=s'$ and $d_1=\ldots=d_n=z$
we arrive at
$
 z^{n(\frac 1p-\frac 1q-\frac 1s)+\alpha+\beta} \lesssim \mathcal{K} \lesssim 1
 $ for any ${z>0}$.
This yields
$\frac1s=\frac{1}{p}-\frac{1}{q}+\frac{\alpha+\beta}{n}$.

Using a method similar to the proof of Corollary \ref{C111}, one can verify that conditions  $\alpha<n/p'$ and $\beta<n/q$ hold.
 Let us show, for example, that $\beta<n/q$.
Indeed, let $C_{\xi}(\overline{z})$ be a cube with
the edge length
 $\xi$ 
centered  at $\overline{z}=(z_1, \ldots, z_n)$.
Fix $d_1, \ldots, d_n>0$
and
set $d_0:=\max_{1\leq i\leq n} d_i$. Notice that
if
 $x\in C_\delta(\overline{0})$, where  $\delta<\pi/4d_0$, and $y\in C_{\pi/{(4d_0)}}(\overline{\pi/d})$, then $\frac{\pi}{4}\leq\frac{d_i|x_i-y_i|}2\leq \frac{3\pi}{4}$, $i=1,\ldots,n$. Therefore,

\begin{align*}
&\|S_{Q_d}\|_{L^p(\mu^{-1};\Bbb R^n)\to L^q(\nu;\Bbb R^n)}
\gtrsim
\Big|
C_{\pi/({4d_0})}(\overline{\pi/d})
\Big|^{-\frac1p}
\Big\|
|y|^{-\beta} \int\limits_{
C_{\pi/({4d_0})}(\overline{\pi/d})
}
|x|^{-\alpha} \prod_{j=1}^n\frac{\sin (d_j(y_j-x_j)/2)}{(y_j-x_j)/2} dx\Big\|_{L_{q}(\Bbb R^n)}
\\
&\qquad \ge
\Big|
C_{\pi/({4d_0})}(\overline{\pi/d})
\Big|^{-\frac1p}
\left(\int\limits_{
C_\delta(\overline{0})
}\Bigg(|y|^{-\beta} \int\limits_{C_{\pi/({4d_0})}(\overline{\pi/d})}
|x|^{-\alpha} \prod_{j=1}^n\frac{\sin (d_j(y_j-x_j)/2)}{(y_j-x_j)/2} dx\Bigg)^qdy\right)^{1/q}
\\& \ \ \ \ \ \ \gtrsim
\left(\int\limits_{C_\delta(\overline{0})}|y|^{-\beta q} dy\right)^{1/q}.
\end{align*}
This implies $\beta<n/q$.
\end{proof}

\vskip 0.8 cm

\vskip 0.5 cm

\end{document}